\newcommand{\Rmnum}[1]{\expandafter\@slowromancap\romannumeral #1@}
\newcommand{\tensor}[1]{\bm{\mathcal{#1}}}
\newcommand{\norm}[2]{\left\| #1 \right\|_{#2}}
\numberwithin{equation}{section}
\newtheorem{thm}{Theorem}
\newtheorem{rmk}{Remark}
\begin{document}

\begin{frontmatter}



\title{APTT: An accuracy-preserved tensor-train method for the Boltzmann-BGK equation}


 \author[mymainaddress]{Zhitao Zhu}
\ead{zhuzt@pku.edu.cn}

\author[mymainaddress,myfirstaddress]{Chuanfu Xiao\textsuperscript{\Letter}}
	\ead{chuanfuxiao@pku.edu.cn}

 \author[myfirstaddress]{Kejun Tang}
\ead{tangkejun@icode.pku.edu.cn}

 \author[mysecondaddress]{Jizu Huang}
\ead{huangjz@lsec.cc.ac.cn}

 \author[mymainaddress,myfirstaddress]{Chao Yang}
	\ead{chao\_yang@pku.edu.cn}

 \address[mymainaddress]{School of Mathematical Sciences, Peking University, Beijing 100871, China}
 \address[myfirstaddress]{PKU-Changsha Institute for Computing and Digital Economy, Changsha 410205, China}
 \address[mysecondaddress]{LSEC, Institute of Computational Mathematics, Academy of Mathematics and System Sciences, Chinese Academy of Sciences, Beijing 100190, China}



\begin{abstract}
Solving the Boltzmann-BGK equation with traditional numerical methods suffers from high computational and memory costs due to the curse of dimensionality.
In this paper, we propose a novel accuracy-preserved tensor-train (APTT) method to efficiently solve the Boltzmann-BGK equation.
A second-order finite difference scheme is applied to discretize the Boltzmann-BGK equation, resulting in a tensor algebraic system at each time step.
Based on the low-rank TT representation, the tensor algebraic system is then approximated as a TT-based low-rank system, which is efficiently solved using the TT-modified alternating least-squares (TT-MALS) solver.
Thanks to the low-rank TT representation, the APTT method can significantly reduce the computational and memory costs compared to traditional numerical methods.
Theoretical analysis demonstrates that the APTT method maintains the same convergence rate as that of the finite difference scheme.
The convergence rate and efficiency of the APTT method are validated by several benchmark test cases.

\end{abstract}



\begin{keyword}
Boltzmann-BGK equation \sep tensor-train \sep low-rank tensor decomposition \sep accuracy-preserved


\MSC[2010] 49M41 \sep 49M05 \sep 65N21

\end{keyword}

\end{frontmatter}




\section{Introduction}\label{sec_intro}

The Boltzmann transport equation (BTE)  is widely used to model super- and hyper-sonic flows \cite{boyd1995predicting}, the flow of electrons in metals and silicon \cite{sondheimer2001mean}, microflow systems \cite{karniadakis2002micro}, and rarefied gas flows \cite{porodnov1974experimental}. Due to its high dimensionality and the intricate quadratic integral collision with a singular collision kernel, solving the BTE efficiently and accurately presents a significant challenge.
As a reduced order model of the complex collision operator, the widely used collision operator is a linear relaxation collision term known as the Bhatnagar–Gross–Krook (BGK) approximation to simplify calculations while ensuring compliance with conservation laws \cite{bhatnagar1954model}. This paper specifically focuses on the Boltzmann-BGK equation, a variant of the BTE under the BGK approximation.

Since the Boltzmann-BGK equation is a high-dimensional partial differential equation (PDE), using classical numerical methods (e.g., the finite element method or the finite difference method) to solve the Boltzmann-BGK equation suffers from high computational cost when the fine mesh is employed. As a mesh-free approach, neural network methods are proposed to solve such challenging problems \cite{de2021physics, lou2021physics, li2024solving}. However, for solving PDEs, the neural network method usually cannot provide rigorous guarantees of accuracy and robustness \cite{karniadakis2021physics, hao2022physics}.
In recent years, low-rank tensor methods have gained significant attention for solving high-dimensional partial differential equations (PDEs) \cite{khoromskij2015tensor,bachmayr2016tensor,khoromskij2018tensor,bachmayr2023low}. This is because tensor methods can provide an efficient way to solve the large-scale linear system arising from PDEs with guaranteed accuracy, while it is a challenging task for traditional numerical methods. On the one hand, the tensor methods are usually used as an algebraic solver for classic discrete systems. Thereby, tensor methods maintain the advantages of traditional numerical methods in terms of high accuracy and preservation of conservation laws. On the other hand, due to the low-rank structures of the target solutions and low-rank approximation techniques, the high computational and memory costs are reduced dramatically in tensor methods. For example,
low-rank tensor approaches have successfully been applied to various PDEs, including the Vlasov equation \cite{kormann2015semi,ehrlacher2017dynamical,guo2022low}, the Boltzmann-type equation \cite{chikitkin2021numerical,boelens2018parallel,boelens2020tensor}, and the Fokker-Planck equation \cite{dolgov2012fast,chertkov2021solution}. In \cite{chikitkin2021numerical}, the authors solve the BTE with the Shakhov model collision operator by combining the discrete velocity method (DVM) \cite{platkowski1988discrete, palczewski1997consistency} and the tensor method, where full tensors formed by the probability density function (PDF) on the velocity grid are approximated using the low-rank Tucker decomposition. In \cite{boelens2018parallel,boelens2020tensor}, a continuous CANDECOMP/PARAFAC (CP) format is employed to solve the Boltzmann-BGK equation. In this method, the six-dimensional Boltzmann-BGK equation is converted into a set of six one-dimensional problems by using the low-rank CP format with Fourier bases. After that, these one-dimensional problems are addressed by the alternating least-squares (ALS) algorithm and the discrete Fourier transform. The inherent low-rank structure in the solutions and the operators of the Boltzmann-BGK equation allow tensor methods to offer advantages in terms of reduced computational and memory costs compared to traditional methods. However, as shown in \cite{boelens2020tensor}, solving the Boltzmann-BGK equation using tensor methods while simultaneously maintaining low-rank representations and accuracy is challenging.

In this paper, we present a novel accuracy-preserved tensor-train (APTT) method for solving the Boltzmann-BGK equation, aiming to maintain low-rank representations and preserve accuracy simultaneously. The main contributions of this work are as follows.
In the APTT method, we apply the Crank-Nicolson Leap Frog (CNLF) scheme and the second-order upwind scheme to discretize the Boltzmann-BGK equation, resulting in a fully tensor algebraic system. Subsequently, the full tensors in the algebraic system are recompressed into low-rank TT format tensors \cite{oseledets2011tensor}. During the recompression process, we identify a low-rank TT format tensor with the lowest TT-rank that satisfies specified accuracy tolerances, which is a critical factor for APTT to achieve accuracy preservation.
The key to APTT is to construct the low-rank TT representation of the collision term,
where the higher-order tensor representing the collision term is decomposed into the products of some lower-order and low-rank TT format tensors.
Based on the low-rank TT representations of PDF and the collision term,
the fully tensor algebraic system is then approximated by a TT-based low-rank linear system, which is efficiently solved using a low-rank linear solver, namely TT-modified alternating least-squares (TT-MALS) or density matrix renormalization group (DMRG) \cite{holtz2012alternating}. The use of low TT-rank representation in the APTT method significantly reduces computational and memory costs compared to traditional methods, especially for high-dimensional problems. Moreover, we establish the error analysis of the APTT method, which demonstrates that it maintains the same convergence rate as that of the discretization scheme by carefully setting tolerances in the APTT method. Furthermore, the error analysis shows that the solution of the APTT method satisfies the conservation laws of mass, momentum, and energy within the prescribed accuracy tolerances. Several numerical experiments validate the convergence rate and efficiency of the APTT method.

The remainder of this paper is organized as follows. In the next section, some related works are introduced. In \Cref{prob_set}, we provide a brief introduction to the Boltzmann-BGK equation. Then, in \Cref{sec:discretization}, we discretize the Boltzmann-BGK equation, yielding a fully tensor algebraic system. The APTT method is proposed in \Cref{sec:approx}. \Cref{analysis} presents complexity and convergence analysis of the APTT method. Numerical experiments are reported in \Cref{sec_res}, and the paper is concluded in \Cref{sec_con}.

\section{Related work}
Over the past few decades, various traditional methods for numerically solving Boltzmann-type equations have emerged, i.e., the direct simulation Monte Carlo (DSMC) method \cite{bird1970direct, bird1994molecular}, the particle-based method \cite{rjasanow1996stochastic, rjasanow1998reduction}, the discrete-velocity model (DVM) \cite{platkowski1988discrete, palczewski1997consistency}, and the spectral method \cite{wu2014solving, pareschi2000stability, wang2019approximation}. Stochastic methods, such as the DSMC and the particle-based method ensure efficiency and preservation of main physical properties but struggle with statistical fluctuations, especially in the presence of non-stationary flows or near continuum regimes. The DVM, a deterministic approach based on a Cartesian grid in velocity and a discrete collision mechanism, preserves main physical properties but tends to be less efficient compared to the DSMC method, and its accuracy is often limited to less than first order \cite{palczewski1997consistency, panferov2002new}. Spectral methods, including the fast Fourier spectral method \cite{pareschi2000stability, mouhot2006fast, wu2014solving} and the spectral method based on global orthogonal polynomials \cite{cai2015approximation, gamba2018galerkin, wang2019approximation}, approximate the PDF through spectral basis functions in the phase space and efficiently calculate collision terms with spectral accuracy. Despite their advantages in the preservation of conservation laws, these traditional methods require high computational and memory costs and exhibit low efficiency due to the curse of dimensionality.

The low efficiency of classical numerical methods in solving the Boltzmann-type equation is partially caused by grid discretization.
One way to tackle this problem is to introduce neural network approximation techniques. Instead of generating the mesh, random samples can be generated to train neural networks to approximate the solution of the Boltzmann-type equation, which avoids the high computational cost originating from grid discretization. Two main types of neural network methods have been employed to solve the Boltzmann-type equation in recent years. The first type, as demonstrated in studies such as \cite{han2019uniformly, huang2022machine, li2023learning, miller2022neural, schotthofer2022structure, xiao2021using, xiao2023relaxnet}, focuses on learning a closed reduced model for the Boltzmann-type equation. This approach concentrates on approximating the collision term and preserving various physical invariances. The second type, represented in works like \cite{de2021physics, li2022physics, li2024solving, lou2021physics}, directly solves the Boltzmann-type equation within the framework of physics-informed neural networks (PINN). By leveraging the advantages of neural networks in handling high-dimensional problems, neural network methods based on PINN exhibit promising potential in solving the Boltzmann-type equation.
However, the efficiency of training neural networks can be significantly affected by the network structure and hyperparameters \cite{Goodfellow-et-al-2016}. Introducing the low-rank structure or sparse representation of the PDF in the Boltzmann-type equation can expedite the training process of neural networks \cite{li2024solving}. Although the neural network methods can provide a new direction to solve the Boltzmann-type equation, developing provable convergence for such methods is challenging, and the accuracy of neural network methods cannot be guaranteed \cite{karniadakis2021physics, hao2022physics}.

\section{Boltzmann-BGK equation}\label{prob_set}
In the absence of external forces,
we consider the following Boltzmann equation \cite{cercignani1988boltzmann} describing rarefied gas dynamics:
\begin{equation}\label{original-eq_boltzmann}
	\frac{\partial f}{\partial t} + \bm{v} \cdot \nabla_{\bm{x}} f = Q(f,f),
\end{equation}
where $t \in (0, {t}^\star]$ is the time coordinate, $\bm{x} \in  \mathbb{R}^D$ ($D = 1,2,3$) denotes the spatial coordinates, $\bm{v} \in  \mathbb{R}^D$ represents the velocity, and $f:= f(\bm{x}, \bm{v}, t)$ is a PDF that estimates the number of particles with velocity $\bm{v}$ at position $\bm{x}$ and time $t$. 
$Q(f,f)$ is the collision operator describing the effects of internal forces due to particle interactions. In classical rarefied gas flows, the collision operator is defined as:
\begin{equation}
	\label{integal-operator}
	Q(f,f)=\int\limits_{\mathbb{R}^D}\int\limits_{\mathbb{S}^{D-1}}\mathbb{B}(\bm{v},\bm{v}_1,\mathbf{n})|f(\bm{x},\bm{v}',t)f(\bm{x},\bm{v}_1',t)-f(\bm{x},\bm{v},t)f(\bm{x},\bm{v}_1,t)|\hbox{d}\mathbf{n}\hbox{d} \bm{v}_1,
\end{equation}
where $\bm{v}$ and $\bm{v}_1$ are the velocities of two particles before the collision. $\mathbf{n}$ is the unit normal vector to the $(D-1)$-dimensional unit sphere $\mathbb{S}^{D-1}$. $\bm{v}'=(\bm{v}+\bm{v}_1+\|\bm{v}-\bm{v}_1\|_2\mathbf{n})/2$ and $\bm{v}'_1=(\bm{v}+\bm{v}_1-\|\bm{v}-\bm{v}_1\|_2\mathbf{n})/2$ are the velocities after the collision. The collision kernel $ \mathbb{B}(\bm{v},\bm{v}_1,\mathbf{n})$ is a non-negative function of the Euclidean 2-norm $\|\bm{v}-\bm{v}_1\|_2$ and the scattering angle between the relative velocities before and after the collision \cite{cercignani1997many}. The collision operator defined by \eqref{integal-operator} satisfies the conservation laws of mass, momentum, and energy  \cite{dimarco2014numerical}:
\begin{equation}
	\label{conservation-law}
	\int\limits_{\mathbb{R}^D}Q(f,f)\varphi(\bm{v})\hbox{d}\bm{v}=0,\qquad \varphi(\bm{v})=1 \hbox{, } \bm{v},\hbox{ or } \|\bm{v}\|_2^2,
\end{equation}
and the Boltzmann $H$-theorem:
\begin{equation}
	\label{H-theorem}
	\int\limits_{\mathbb{R}^D}Q(f,f)\log(f)\hbox{d}\bm{v}\leq0.
\end{equation}
According to the conservation laws and the Boltzmann $H$-theorem, any equilibrium PDF satisfying $Q(f,f)=0$ is locally Maxwellian:
\begin{equation}\label{eq:equilibrium-1}
	f_{\mathrm{eq}} (\bm{x}, \bm{v}, t) = \frac{\rho(\bm{x},t)}{(2\pi k_{\mathrm{B}} T(\bm{x},t)/{M})^{D/2}}\exp\left(-\frac{M\|\bm{v}-\bm{U}(\bm{x},t)\|_2^2}{2k_{\mathrm{B}}T(\bm{x},t)}\right),
\end{equation}
where $k_{\mathrm{B}}$ is the Boltzmann constant, $M$ is the particle mass. In \eqref{eq:equilibrium-1}, macroscopic variables $\rho,\,\bm{U},\,T$ denote the number density, mean velocity, and temperature of a gas, which are respectively defined as follows
\begin{equation}
	\label{macro-varibales-1}
	\begin{aligned}
		\rho(\bm{x},t)&=\int_{\mathbb{R}^D}f(\bm{x},\bm{v},t)\mathrm{d}\bm{v},\\
		\bm{U}(\bm{x},t)&:=(U_1(\bm{x},t),\cdots,U_D(\bm{x},t))=\frac{1}{\rho(\bm{x},t)}\int_{\mathbb{R}^D}\bm{v}f(\bm{x},\bm{v},t)\mathrm{d}\bm{v}, \\
		T(\bm{x},t)&=\frac{M}{D k_{\mathrm{B}}\rho(\bm{x},t)}\int_{\mathbb{R}^D}\|\bm{v}-\bm{U}(\bm{x},t)\|^2_2 f(\bm{x},\bm{v},t)\mathrm{d}\bm{v}.
	\end{aligned}
\end{equation}

The collision operator \eqref{integal-operator} is a nonlinear integral operator, posing challenges for designing efficient numerical algorithms. In this work, we consider the simplest operator satisfying \eqref{conservation-law} and \eqref{H-theorem}, which is the BGK operator \cite{bhatnagar1954model} given by
\begin{equation}\label{eq:collision}
	Q(f,f) = {\nu (\bm{x}, t)} \left[ f_{\mathrm{eq}} (\bm{x}, \bm{v}, t) - f(\bm{x}, \bm{v}, t)\right].
\end{equation}
Here the collision frequency $\nu (\bm{x}, t) > 0$ is usually defined as
$$ \nu(\bm{x}, t) = K\rho(\bm{x}, t)T(\bm{x}, t)^{1-\mu},$$
where $K>0$ is a constant and the exponent $\mu$ of the viscosity law depends on the molecular interaction potential and the type of the gas. With the simplification of the collision operator, the Boltzmann equation \eqref{original-eq_boltzmann} yields the Boltzmann-BGK equation
\begin{equation}\label{eq_boltzmann-1}
	\frac{\partial f}{\partial t} + \bm{v} \cdot \nabla_{\bm{x}} f = {\nu (\bm{x}, t)} \left[ f_{\mathrm{eq}} (\bm{x}, \bm{v}, t) - f(\bm{x}, \bm{v}, t)\right].
\end{equation}

For convenience of description, let us introduce several dimensionless parameters. Assume that the Boltzmann-BGK \eqref{eq_boltzmann-1} is posed on a $2D$-dimensional hypercube, i.e. $f:\Omega_{\bm{x}}\times \Omega_{\bm{v}}\times (0,t^\star]\rightarrow \mathbb{R}^+$ with $\Omega_{\bm{x}}=[-b_{\bm{x}},b_{\bm{x}}]^D$ and $\Omega_{\bm{v}}=[-b_{\bm{v}},b_{\bm{v}}]^D$ representing the spatial domain
and the velocity domain, respectively. Let us introduce the following dimensionless mapping
\begin{equation}
	\begin{aligned}
		&\hat{\bm{x}}=\frac{\bm{x}\pi}{b_{\bm{x}}},\quad \hat{\bm{v}}=\frac{\bm{v}\pi}{b_{\bm{v}}},\quad \hat{t}=\frac{tb_{\bm{v}}}{b_{\bm{x}}},
		\quad \\
		&\hat{\bm{U}}(\hat{\bm{x}},\hat t)=\frac{\bm{U}({\bm{x}}, t)\pi}{b_{\bm{v}}},\quad  \hat{T}(\hat{\bm{x}},\hat t)=\frac{T({\bm{x}}, t)}{T_c},\quad \hat{\nu}(\hat{\bm{x}},\hat t)=\frac{\nu({\bm{x}}, t)\lambda}{b_{\bm{v}}},
	\end{aligned}
\end{equation}
where $\lambda$ is the mean free path of a gas molecule, and $T_c$ is a characteristic temperature. By rescaling the PDF $\hat{f}(\hat{\bm{x}},\hat{\bm{v}},\hat t)={f}({\bm{x}},{\bm{v}}, t)b_{\bm{x}}^Db_{\bm{v}}^D/\pi^{2D}$, the Boltzmann-BGK \eqref{eq_boltzmann-1} can be rewritten as
\begin{equation}\label{eq_boltzmann}
	\frac{\partial \hat f}{\partial t} + \hat{\bm{v}} \cdot \nabla_{\hat{\bm{x}}} \hat f =\frac {\hat{\nu} (\hat{\bm{x}}, \hat t)} {\mathrm{Kn}}\left[ \hat {f}_{\mathrm{eq}} (\hat{\bm{x}}, \hat{\bm{v}},\hat t) - \hat f(\hat{\bm{x}}, \hat{\bm{v}},\hat t)\right],
\end{equation}
where equilibrium PDF is
\begin{equation}\label{eq:equilibrium}
	\hat {f}_{\mathrm{eq}} (\hat{\bm{x}}, \hat{\bm{v}},\hat t) = \frac{\hat{\rho}(\hat{\bm{x}},\hat {t})}{(2\pi \hat {T}(\hat{\bm{x}},\hat t)/\mathrm{Bo})^{D/2}}\exp\left(-\mathrm{Bo}\frac{\|\hat{\bm{v}}-\hat{\bm{U}}(\hat{\bm{x}},\hat t)\|_2^2}{2\hat T(\hat{\bm{x}},\hat t)}\right)
\end{equation}
with the dimensionless Knudsen (Kn) and Boltzmann (Bo) numbers being $\mathrm{Kn}=\frac{\lambda}{b_{\bm{x}}}$ and $\mathrm{Bo}=\frac{M b_{\bm{v}}^2}{\pi^2k_{\mathrm B} T_c}$, respectively.
The number density, mean velocity, and temperature are defined as follows
\begin{equation}
	\label{macro-varibales}
	\begin{aligned}
		\hat{\rho}(\hat{\bm{x}},\hat t)&=\int_{[-\pi,\pi]^D}\hat f(\hat{\bm{x}}, \hat{\bm{v}},\hat t)\mathrm{d}\hat{\bm{v}},\\
		\hat{\bm{U}}(\hat{\bm{x}},\hat t)&=\frac{1}{\hat{\rho}(\hat{\bm{x}},\hat t)}\int_{[-\pi,\pi]^D}\hat{\bm{v}}\hat f(\hat{\bm{x}}, \hat{\bm{v}},\hat t)\mathrm{d}\hat{\bm{v}}, \\
		\hat T(\hat{\bm{x}},\hat t)&=\frac{\mathrm{Bo}}{D \hat{\rho}(\hat{\bm{x}},\hat t)}\int_{[-\pi,\pi]^D}\|\hat{\bm{v}}-\hat{\bm{U}}(\hat{\bm{x}},\hat t)\|^2_2\hat f(\hat{\bm{x}}, \hat{\bm{v}},\hat t)\mathrm{d}\hat{\bm{v}}.
	\end{aligned}
\end{equation}
In the following of this paper, we drop the hat below and continue to use the dimensionless
quantities.

The initial condition for the Boltzmann-BGK equation \eqref{eq_boltzmann} is set as $f(\bm{x}, \bm{v}, t):=f_0(\bm{x}, \bm{v})$.
According to \eqref{macro-varibales}, the locally Maxwellian $f_{\mathrm{eq}} (\bm{x}, \bm{v}, t)$ in \eqref{eq:equilibrium} and the collision frequency $\nu(\bm{x}, t) $ are nonlinear functionals of the PDF $f (\bm{x}, \bm{v}, t)$. Therefore, the Boltzmann-BGK equation \eqref{eq_boltzmann} is a nonlinear PDE in $2D+1$ dimension. Numerically solving the Boltzmann-BGK is computationally expensive, especially for fine meshes. To address this issue, we will propose an efficient algorithm based on tensor-train decomposition to numerically solve the Boltzmann-BGK equation in the following of this paper.

\section{Discretization of Boltzmann-BGK}\label{sec:discretization}
In this section, we introduce a second-order finite difference scheme in both temporal and spatial directions to discretize the Boltzmann-BGK equation. The finite difference scheme is described in tensor form.
Let us divide the time interval $(0,t^\star]$ into several subintervals $(t_n,\,t_{n+1}]$ with fixed time step size $\Delta t$ and $t_n=n\Delta t$. Here, $n=0,\,1,\,\cdots,\,\frac{t^\star}{\Delta t}-1$. For ease of description, we assume that $D=3$. The periodic boundary condition is applied in $\bm{x}$ direction and the homogeneous Dirichlet boundary condition is used in $\bm{v}$ direction. The finite difference scheme for the Boltzmann-BGK equation with $D=1,\,2$ can be obtained in a similar manner. The computational domain $[-\pi,\,\pi]^{6}$ is covered by a uniform tensor product grid with mesh size $h=2\pi/m$. Let us denote $(\bm{x}^{\bm{k}},\,\bm{v}^{\bm{l}}):=(h\bm{k},h\bm{l})-(\pi,\,\pi,\,\cdots,\,\pi)$ with $\bm{k}=(k_1,\,k_2,\,k_3)$, $\bm{l}=(l_1,\,l_2,\,l_3)$, and ${k}_i,\,{l}_i=0,\,1,\,\cdots,\,m-1$, respectively. The solution of the Boltzmann-BGK equation at $t=t_n$ is approximated as a sixth-order tensor $\tensor{F}^n:=\left(\tensor{F}^n_{k_1,k_2,k_3,l_1,l_2,l_3}\right)\in\mathbb{R}^{m\times m\cdots\times m}$ with $\tensor{F}^n_{k_1,k_2,k_3,l_1,l_2,l_3}:=\tensor{F}^n_{\bm{k},\bm{l}}\approx f(\bm{x}^{\bm{k}},\bm{v}^{\bm{l}},t_n)$.  Assuming the value of tensor $\tensor{F}^n$ is known, we can calculate the number density, mean velocity, and temperature using a suitable numerical integration method, resulting in third-order tensors $\tensor{\rho}^n:=\left(\tensor{\rho}^n_{\bm{k}}\right)$, $\tensor{U}^n:=\left(\tensor{U}^n_{\bm{k}}\right)$, and $\tensor{T}^n:=\left(\tensor{T}^n_{\bm{k}}\right)$, respectively. To match the accuracy of the second-order discretization scheme, we use the trapezoid rule, a second-order numerical integration method, to calculate these integrals.
Specifically, for the number density $\rho(\bm{x})$, it follows
\begin{equation}\label{eq:integral-rho}
	\begin{split}
		\rho(\bm{x}^{\bm{k}},t_n) = \int_{[-\pi,\pi]^3}f(\bm{x}^{\bm{k}},\bm{v},t_n)\mathrm{d}\bm{v}
		\approx h^3\sum\limits_{\bm{l}} \tensor{F}^n_{\bm{k},\bm{l}}:=\tensor{\rho}^n_{\bm{k}}.
	\end{split}
\end{equation}
{Similarly, for the mean velocity $\bm{U}(\bm{x})$ and the temperature $T(\bm{x})$, we have}
\begin{equation}\label{eq:integral-U}
	\begin{split}
		\bm{U}(\bm{x}^{\bm{k}},t_n) = \frac1{\rho(\bm{x}^{\bm{k}},t_n)}{\int_{[-\pi,\pi]^3}\bm{v}f(\bm{x}^{\bm{k}},\bm{v},t_n)\mathrm{d}\bm{v}}
		\approx \frac {h^3} {\tensor{\rho}^n_{\bm{k}}} \sum\limits_{\bm{l}}\bm{v}^{\bm{l}} \tensor{F}^n_{\bm{k},\bm{l}}
		:=\tensor{U}^n_{\bm{k}},
	\end{split}
\end{equation}
and
\begin{equation}\label{eq:integral-T}
	\begin{split}
		T(\bm{x}^{\bm{k}},t_n) = &\frac{\mathrm{Bo}}{3 {\rho}({\bm{x}}^{\bm{k}}, t)}\int_{[-\pi,\pi]^3}\|\bm{v}-\bm{U}(\bm{x}^{\bm{k}},t_n)\|_2^2f(\bm{x},\bm{v},t_n)\mathrm{d}\bm{v}\\
		\approx&\frac{\mathrm{Bo}h^3}{3 {\tensor{\rho}}^n_{\bm{k}}}\sum\limits_{\bm{l}}\|\bm{v}^{\bm{l}}-\tensor{U}^n_{\bm{k}}\|_2^2 \tensor{F}^n_{\bm{k},\bm{l}}
		:=\tensor{T}^n_{\bm{k}}.
	\end{split}
\end{equation}
Substituting them into \eqref{eq:equilibrium} and \eqref{eq:collision}, it yields a sixth-order tensor $\tensor{Q}^n:=\left(\tensor{Q}^n_{\bm{k},\bm{l}}\right)$ $\in\mathbb{R}^{m\times m\cdots\times m}$, whose element approximates the collision operator $Q$ at $(\bm{x}^{\bm{k}},\bm{v}^{\bm{l}},t_n)$.

We then introduce a second-order upwinding scheme to discretize $\bm{v}\cdot\nabla_{\bm{x}}f$ in \eqref{eq_boltzmann}. By setting $\bm{e}_1=(1,0,0)$, $\bm{e}_2=(0,1,0)$, $\bm{e}_3=(1,0,0)$, and $x_i=\bm{x}\cdot\bm{e}_i$, we define two useful notations as following
\begin{equation}
	\begin{aligned}
		D_{x_i}^+\tensor{F}^n_{\bm{k},\bm{l}}&=\frac{3\tensor{F}^n_{\bm{k},\bm{l}}-4\tensor{F}^n_{\bm{k}-\bm{e}_i,\bm{l}}+\tensor{F}^n_{\bm{k}-2\bm{e}_i,\bm{l}}}{2h},\\ D_{x_i}^-\tensor{F}^n_{\bm{k},\bm{l}}&=-\frac{3\tensor{F}^n_{\bm{k},\bm{l}}-4\tensor{F}^n_{\bm{k}+\bm{e}_i,\bm{l}}+\tensor{F}^n_{\bm{k}+2\bm{e}_i,\bm{l}}}{2h}.
	\end{aligned}
\end{equation}
Then, the second-order upwinding scheme is defined as follows:
\begin{equation}\label{eq:upwind}
	\bm{v}^{\bm{l}}\cdot\nabla_{\bm{x}}f(\bm{x}^{\bm{k}},\bm{v}^{\bm{l}},t_n) \approx [\bm{v}^{\bm{l}}\cdot\nabla_{\bm{x}}\tensor{F}^n_{\bm{k},\bm{l}}]_u:=\sum\limits_{i=1}^3 \left((v_i^{l_i})^{+}D_{x_i}^+\tensor{F}^n_{\bm{k},\bm{l}}+(v_i^{l_i})^{-}D_{x_i}^-\tensor{F}^n_{\bm{k},\bm{l}}\right),
\end{equation}
where $v_i^{l_i}=\bm{v}^{\bm{l}}\cdot\bm{e}_i$,  $(v_i^{l_i})^{+} = \max(v_i^{l_i},0)$, and $(v_i^{l_i})^-=\min(v_i^{l_i},0)$, respectively. The time derivative $\frac{\partial f}{\partial t}$ at $t=t_n$ is discretized by using the CNLF scheme \cite{boelens2020tensor}, which is a second-order semi-implicit method and widely used in many applications such as atmosphere and ocean \cite{asselin1972frequency,thomas2005ncar,williams2011raw,kubacki2013uncoupling}. The CNLF scheme is stable and semi-implicit, allowing for less restriction in the choice of the time step size \cite{layton2012stability,kubacki2013uncoupling,jiang2015crank}. More importantly, as compared with a fully implicit scheme, it avoids solving the nonlinear system caused by the collision operator, providing the possibility for dimensionality reduction methods based on low-rank tensor representations. The fully discrete system for the Boltzmann-BGK equation \eqref{eq_boltzmann} is given as
\begin{equation}\label{eq:cnlf}
	\frac{\tensor{F}^{n+1}_{\bm{k},\bm{l}}-\tensor{F}^{n-1}_{\bm{k},\bm{l}}}{2\Delta t} =- \frac{[\bm{v}^{\bm{l}}\cdot\nabla_{\bm{x}}\tensor{F}^{n+1}_{\bm{k},\bm{l}}]_u+[\bm{v}^{\bm{l}}\cdot\nabla_{\bm{x}}\tensor{F}^{n-1}_{\bm{k},\bm{l}}]_u}{2} + \tensor{Q}^n_{\bm{k},\bm{l}},\quad \hbox{for all }\bm{k},\bm{l}.
\end{equation}

Let us introduce two matrices $\bm{D}^+$, $\bm{D}^-\in\mathbb{R}^{m\times m}$ defined as
\begin{equation}\label{eq:upwind-mat}
	\bm{D}^{+} = \frac{1}{2h}\left[\begin{array}{ccccc}
		3 &  & &1 &-4  \\
		-4 &3 & & & 1 \\
		1&-4 &3 & &  \\
		& \ddots&\ddots &\ddots &  \\
		& & 1& -4&3  \\
	\end{array}\right]\ \text{and}\ \bm{D}^{-} = -\left(\bm{D}^{+}\right)^T.
\end{equation}
Two diagonal matrices $\bm{V}^{+},\,\bm{V}^{-}\in\mathbb{R}^{m\times m}$ are respectively given by
\begin{equation}
	\label{eq:upwind-velocity}
	\begin{aligned}
		\bm{V}^{+}&=\hbox{diag}\left\{(v_1^0)^+,(v_1^1)^+,\cdots,(v_1^{m-1})^+\right\}, \\
		\bm{V}^{-}&=\hbox{diag}\left\{(v_1^0)^-,(v_1^1)^-,\cdots,(v_1^{m-1})^-\right\}.
	\end{aligned}
\end{equation}
{Let
	$\tensor{I},\tensor{L}\in\mathbb{R}^{m^6\times m^6}$ be two matrices corresponding to identity matrix and the second-order upwind scheme \eqref{eq:upwind}, respectively. The definitions of the two matrices are respectively given as
	\begin{equation*}
		\tensor{I} = \bm{I}\otimes\bm{I}\otimes\bm{I}\otimes\bm{I}\otimes\bm{I}\otimes\bm{I},
	\end{equation*}
	and
	\begin{equation}\label{eq:coefficient}
		\begin{split}
			\tensor{L} = &-\bm{V}^+\otimes \bm{I}\otimes \bm{I}\otimes \bm{D}^+ \otimes \bm{I}\otimes \bm{I} - \bm{V}^-\otimes \bm{I}\otimes \bm{I}\otimes \bm{D}^- \otimes \bm{I}\otimes \bm{I}\\
			&-\bm{I}\otimes\bm{V}^+\otimes  \bm{I}\otimes  \bm{I}\otimes \bm{D}^+ \otimes  \bm{I} - \bm{I}\otimes \bm{V}^-\otimes \bm{I}\otimes \bm{I}\otimes \bm{D}^- \otimes \bm{I}\\
			&-\bm{I}\otimes  \bm{I}\otimes\bm{V}^+\otimes  \bm{I}\otimes  \bm{I}\otimes \bm{D}^+  - \bm{I} \otimes  \bm{I}\otimes \bm{V}^-\otimes \bm{I}\otimes \bm{I}\otimes \bm{D}^-,
		\end{split}
	\end{equation}
	where $\bm{I}$ is the $m\times m$ identity matrix and $\otimes$ denotes the tensor product. According to \cite{kazeev2012low, oseledets2011tensor,lee2018fundamental}, we can rewrite the two matrices $\tensor{I},\tensor{L}$ as low-rank TT format operators, with rank $r= 1$ and 6, respectively. The low-rank TT format operators are still denoted as $\tensor{I}$ and $\tensor{L}$, respectively. Then we can rewrite the fully discrete system \eqref{eq:cnlf} in tensor form as follows:
	\begin{equation}\label{eq:linear1}
		(\tensor{I}-\Delta t\tensor{L})\tensor{F}^{n+1} = (\tensor{I}+\Delta t\tensor{L})\tensor{F}^{n-1}+2\Delta t\tensor{Q}^{n}.
\end{equation}}

The second-order upwinding scheme used in fully discrete system \eqref{eq:linear1} has good stable performance in numerical simulations. However, due to the hyperbolic property of the Boltzmann-BGK equation, the fully discrete system \eqref{eq:linear1} with relatively large time step size will suffer numerical oscillations. To suppress it, following the idea of \cite{strikwerda2004finite}, we add an artificial dissipation into the upwind scheme \eqref{eq:upwind}. Let matrix $\bm{D}\in\mathbb{R}^{m\times m}$ be
\begin{equation}
	\bm{D} = \frac{1}{h^4}\left[\begin{array}{cccccc}
		6 & -4 & 1 & &1 &-4  \\
		-4 &6 & -4 & 1 & & 1 \\
		1&-4 &6 &-4 &1 &  \\
		& \ddots&\ddots&\ddots &\ddots &\ddots  \\
		1 &  & 1 &-4& 6&-4  \\
		-4 &1 &  &1 & -4&6  \\
	\end{array}\right]\
\end{equation}
corresponding to a fourth-order central finite difference scheme for one-dimensional operator $\Delta^2$. By setting $\overline{\tensor{M}}\in\mathbb{R}^{m^6\times m^6}$ be a matrix defined as
\begin{equation}
	\overline{\tensor{M}} :=\bm{I}\otimes\bm{I}\otimes  \bm{I}\otimes  \left(\bm{D} \otimes\bm{I}\otimes   \bm{I} + \bm{I}\otimes \bm{D} \otimes  \bm{I} +\bm{I} \otimes  \bm{I} \otimes \bm{D}\right),
\end{equation}
the fully discrete system for the Boltzmann-BGK equation \eqref{eq_boltzmann} with artificial dissipation is given as following:
\begin{equation}\label{eq:modified_upwind}
	(\tensor{I}-\Delta t\tensor{L})\tensor{F}^{n+1} = (\tensor{I}+\Delta t\tensor{L})\tensor{F}^{n-1}+2\Delta t \tensor{Q}^n-\frac{\epsilon h^4}{2^4}\tensor{M}\tensor{F}^{n-1},
\end{equation}
where $\epsilon$ is the coefficient of the artificial viscosity term and $\tensor M$ is a low-rank TT format operator with rank $r=3$ corresponding to matrix $\overline{\tensor{M}}$. As reported in \Cref{sec_discontinuous_init}, the artificial dissipation term is important and indispensable in solving the Boltzmann-BGK equation with discontinuous initial distributions.

\begin{rmk}
	We remark that other difference schemes on uniform meshes such as the five-point centered-difference formula can be used for spatial discretization since the coefficient matrix of its corresponding linear system also enjoys the low-rank representation similar to \eqref{eq:coefficient}.
\end{rmk}

\section{Accuracy-preserved tensor-train method}\label{sec:approx}
Using classical iterative methods, such as generalized minimal residual (GMRES) \cite{saad1986gmres} and biconjugate gradient stabilized \cite{van1992bi} to solve the linear system \eqref{eq:linear1} or \eqref{eq:modified_upwind}, results in total computational complexity and memory cost no less than $\mathcal{O}(m^{2D})$. Consequently, solving the Boltzmann-BGK equation with $D=2$ or $3$ becomes prohibitively expensive as the number of grids increases.  To address this issue, we develop a novel low-rank solver to solve the linear system \eqref{eq:linear1} or \eqref{eq:modified_upwind} based on the TT format \cite{oseledets2011tensor}, reducing total computational complexity and memory cost from $\mathcal{O}(m^{2D})$ to $\mathcal{O}(m^{D+1}r)$, where $r$ is the rank of $\tensor{F}^n$. Since the numerical error between the exact solution and the low-rank solution obtained by the low-rank solver is proved to be bounded by several prescribed accuracy tolerances, we call the low-rank solver as the APTT solver.
To this end, we first introduce a low-rank TT format tensor to approximate the full tensor for the PDF and then construct the right-hand side of \eqref{eq:linear1} or \eqref{eq:modified_upwind} based on the low-rank TT format tensor. Finally, the APTT method is summarized in \Cref{sec:tt-solver}.

\subsection{Low-rank tensor-train format tensor and recompression}
\label{sec:recompression}
Let us use bold calligraphic letters to represent full tensors or TT format tensors.
A low-rank $d$th-order  tensor $\tensor{A}\in\mathbb{R}^{I_1\times I_2 \cdots\times I_d}$ with TT format can be defined as follows  \cite{oseledets2011tensor}
\begin{equation}\label{eq:tt-format}
	\tensor{A}_{i_1,i_2,\cdots,i_d}:=\sum\limits_{\alpha_1,\alpha_2,\cdots,\alpha_{d-1}}\tensor{A}_{i_1,\alpha_1}^{1}\tensor{A}^{2}_{\alpha_1,i_2,\alpha_2}\cdots\tensor{A}_{\alpha_{d-1},i_d}^{d},
\end{equation}
where $\tensor{A}^{1}\in\mathbb{R}^{I_1\times r_1}$, $\tensor{A}^{d}\in\mathbb{R}^{r_{d-1}\times I_d}$, and $\tensor{A}^{i}\in\mathbb{R}^{r_{i-1}\times I_i\times r_i}$ are TT cores, and $(r_1,r_2,\cdots,r_{d-1})$ is called the TT-rank of $\tensor{A}$. For convenience, the TT format tensor $\tensor{A}$ is generally denoted as $\mathbb{TT}(\tensor{A}^{1},\tensor{A}^{2},\cdots,\tensor{A}^{d})$.
With the low-rank TT representation, the memory cost of the $d$th-order tensor $\tensor{A}$ will be reduced from $\mathcal{O}(I_1I_2\cdots I_d)$ to $\mathcal{O}(I_1r_1+r_1I_2r_2\cdots+I_dr_{d-1})$, which only grows linearly with $d$. Based on the low-rank TT format, the newly proposed APTT algorithm aims to find a low-rank TT approximation for tensor $\tensor{F}^{n+1}$ by solving the tensor system \eqref{eq:linear1} or \eqref{eq:modified_upwind}. At each time step, the system \eqref{eq:linear1} or \eqref{eq:modified_upwind} is solved by implementing several basic tensor operations, such as addition and the Hadamard product. However, this often results in the rapid increase of the TT-ranks for the solution $\tensor{F}^{n+1}$ and growing computational complexity \cite{oseledets2011tensor, boelens2018parallel,boelens2020tensor}. To suppress the excessive growth of TT-rank, we introduce the recompression algorithm for rank reduction.

For the $d$th-order full tensor or a TT format tensor $\tensor{A}$ with a relatively large rank, there are two types of recompression algorithms. The first type of algorithms \cite{oseledets2011tensor,dolgov2013tt} is to predetermine a target truncation rank $\bm{r}$, and then find the best rank-$\bm{r}$ approximation of $\tensor{A}$ by solving
\begin{equation*}
	\min\limits_{\text{rank}(\tensor{B})\leq\bm{r}} \|\tensor{A}-\tensor{B}\|_F.
\end{equation*}
These algorithms can easily control the rank of TT format tensor, but may result in a loss of accuracy
when the predetermined truncation rank is inappropriate \cite{oseledets2011tensor,ehrlacher2022sott}. An alternative strategy for rank reduction of TT format tensors aims to find a tensor $\tensor{B}$ with the lowest TT-rank such that
\[
\|\tensor{A}-\tensor{B}\|_F\leq\varepsilon_b\|\tensor{A}\|_F,
\]
where $\varepsilon_b\in(0,1)$ is a prescribed accuracy tolerance. This type of algorithms is essentially equivalent to finding the best $\varepsilon_b$-approximation for a tensor $\tensor{A}$ in the TT format, which can be solved by high-performance TT-rounding algorithms \cite{oseledets2011tensor,al2022parallel,daas2022parallel}, such as the classical TT-SVD algorithm. Since we lack prior information about the ranks of tensors in the algebraic system \eqref{eq:linear1} and aim to control the accuracy of the low-rank approximation, we select the best $\varepsilon_b$-approximation for recompression in the APTT method.
For more details about basic operations on the TT format, we refer the readers to \cite{oseledets2011tensor,lee2018fundamental,cichocki2016tensor}.

\subsection{Low-rank TT format tensor of the right-hand side}
In this part, we will outline the procedure for constructing the low-rank TT format tensors for the full tensors in the right-hand side of \eqref{eq:linear1} and \eqref{eq:modified_upwind}, which includes terms:
$(\tensor{I}+\Delta t\tensor{L})\tensor{F}^{n-1}$, $2\Delta t\tensor{Q}^n$, and $-\frac{\epsilon h^4}{2^4}\tensor{M}\tensor{F}^{n-1}$.
The operators $\tensor{I}$, $\tensor{L}$, and $\tensor{M}$ are known to be low-rank TT tensor operators with TT-ranks at most $1$, $6$, and $3$, respectively.
Initially, $\tensor{F}^0$ is a $2D$th-order full tensor. We use the TT-SVD algorithm to find the best $\epsilon_b$-approximation for it and still denote it as $\tensor{F}^0$.
Assume that $\tensor{F}^{n-1}$ or $\tensor{F}^{n}$ is a low-rank TT format tensor with relatively small TT-rank, which is the initial low TT-rank tensor $\tensor{F}^0$ or a low TT-rank solution at $(n-1)$-th or $n$-th time step.
The two linear terms, $(\tensor{I}+\Delta t\tensor{L})\tensor{F}^{n-1}$ and $-\frac{\epsilon h^4}{2^4}\tensor{M}\tensor{F}^{n-1}$, are then approximately calculated by matrix-vector products in TT formats, which preserves the low TT-rank structure.

Next, let's discuss the construction of the low-rank TT format tensor for the collision operator $\tensor{Q}^n$. Due to the nonlinearity of the collision operator, constructing the low-rank TT format tensor $\tensor{Q}^n$ poses a challenge. There are primarily two types of methods to address this challenge. The first type of methods computes the TT format tensor $\tensor{Q}^n$ using the low-rank TT format tensor $\tensor{F}^n$.
Since the collision operator $Q$ is a nonlinear functional of $f$, the $2D$th-order tensor $\tensor{Q}^n$ is also a nonlinear operator on the $2D$th-order tensor $\tensor{F}^n$. Consequently, if $\tensor{F}^n$ is approximated by a low-rank TT representation, finding a low-rank TT representation for $\tensor{Q}^n$ becomes a challenging task. Nonlinear calculations typically result in a rapid increase in rank and corresponding computational complexity \cite{espig2020iterative}. Additionally,
most nonlinear operators are calculated using an iterative method, such as Newton's iteration. However, designing a nonlinear iterative method based on low-rank TT formats that can ensure both accuracy and a fast convergence rate remains challenging  \cite{cichocki2016tensor,lee2018fundamental,espig2020iterative}.
An alternative approach is to transform the low-rank TT format tensor $\tensor{F}^n$ into a full $2D$th-order tensor and then use it to compute the $2D$th-order tensor $\tensor{Q}^n$. Subsequently, the full $2D$th-order tensor $\tensor{Q}^n$ is converted back into a low-rank TT format tensor. However, this method incurs a computational cost on the order of $m^{2D}r^2$, which is inefficient, particularly for $D\geq 2$.

In order to balance the computational cost and accuracy, we introduce a new approach to calculate the low-rank TT format of $\tensor{Q}^n$ based on partial reduction of the TT format tensor $\tensor{F}^n$.
For $D=3$, let us assume that the low-rank TT format tensor $\tensor{F}^n$ is given by
\begin{equation}
	\tensor{F}^n_{k_1,k_2,k_3,l_1,l_2,l_3} = \sum\limits_{\alpha_1,\alpha_2,\cdots,\alpha_{5}}
	\tensor{G}_{k_1,\alpha_1}^{1,n}\tensor{G}^{2,n}_{\alpha_1,k_2,\alpha_2}\tensor{G}^{3,n}_{\alpha_2,k_3,\alpha_3}\tensor{G}^{4,n}_{\alpha_3,l_1,\alpha_4}\tensor{G}^{5,n}_{\alpha_4,l_2,\alpha_5}\tensor{G}_{\alpha_{5},l_3}^{6,n}.
\end{equation}
First, we compute an intermediate fourth-order full tensor $\tensor{Y}^n\in\mathbb{R}^{m\times m\times m\times r_3}$ by partially reducing the TT format tensor $\tensor{F}^n$ as follows:
\begin{equation}\label{eq:temtensor}   \tensor{Y}^n_{k_1,k_2,k_3,\alpha_3}=\sum\limits_{\alpha_1,\alpha_2}\tensor{G}^{1,n}_{k_1,\alpha_1}\tensor{G}^{2,n}_{\alpha_1,k_2,\alpha_2}\tensor{G}^{3,n}_{\alpha_2,k_3,\alpha_3}.
\end{equation}
The full tensor $\tensor{Y}^n$ is then used to construct third-order full tensors for the number density $\tensor{\rho}^n$, velocity $\tensor{U}^n$, and temperature $\tensor{T}^n$. Details of this step are summarized in \Cref{algo:rho-u-t}. Secondly, we use the third-order full tensors $\tensor{\rho}^n$,  $\tensor{U}^n$, and $\tensor{T}^n$ to calculate the tensor $\tensor{Q}^n$. Recalling \eqref{eq:collision}, due to the discretization of the equilibrium PDF $f_{\text{eq}}(\bm{x},\bm{v},t_n)$, the direct calculation of the equilibrium PDF results in a sixth-order full tensor and the corresponding sixth-order full tensor $\tensor{Q}^n$.

To avoid generating a sixth-order full tensor, we rewrite \eqref{eq:equilibrium} as
\begin{equation}\label{eq:equilibrium-2}
	\begin{split}
		f_{\mathrm{eq}} (\bm{x}, \bm{v}, t) = \prod\limits_{i=1}^{3}\frac{\rho(\bm{x},t)^{1/3}}{(2\pi  {T}({\bm{x}}, t)/\mathrm{Bo})^{1/2}}\exp\left(-\mathrm{Bo}\frac{({v}_i-{U}_i(\bm{x},t))^2}{2T(\bm{x},t)}\right):=\prod\limits_{i=1}^{3} f_{\mathrm{eq}}^i(\bm{x}, {v}_i, t).
	\end{split}
\end{equation}
Let sixth-order full tensor $\tensor{ F}^n_{\mathrm{eq}}$ and fourth-order full tensors $\tensor{F}^{i,n}_{\mathrm{eq}}$ with $i=1,2,3$, be the discretizations of $f_{\mathrm{eq}} (\bm{x}, \bm{v}, t)$ and $f_{\mathrm{eq}}^i(\bm{x}, {v}_i, t)$ at $t=t_n$, respectively. According to \eqref{eq:equilibrium-2}, we have
\begin{equation}
	\label{tensor-prodcut-structure}
	[\tensor{F}_{\text{eq}}^{n}]_{k_1,k_2,k_3,l_1,l_2,l_3} = \prod\limits_{i=1}^3[\tensor{F}_{\text{eq}}^{i,n}]_{k_1,k_2,k_3,l_i},
\end{equation}
which implies that the sixth-order full tensor $\tensor{F}^n_{\mathrm{eq}}$ can be represented as the product of three fourth-order full tensors $\tensor{F}^{i,n}_{\mathrm{eq}}$. This tensor product structure \eqref{tensor-prodcut-structure} remains valid even when low-rank TT format tensors are employed to approximate the four full tensors.
Therefore, we compute the fourth-order tensors $\tensor{F}^{i,n}_{\mathrm{eq}}$ with $i=1,2,3$ and subsequently convert the fourth-order tensor $\tensor{F}^{i,n}_{\mathrm{eq}}$ into a low-rank TT format tensor $\mathbb{TT}(\tensor{F}_{\text{eq}}^{i,n})$. The computational cost of this process is on the order of $m^{D+1}r^2$, which is two orders of $m$ less than the computational cost of sixth-order tensor compression. Then, we introduce an \texttt{expand} operator to convert fourth-order TT format tensor $\mathbb{TT}(\tensor{F}_{\text{eq}}^{i,n})$ into a sixth-order TT format tensor $\tensor{E}_{\text{eq}}^{i,n}$ (Step 3 in \Cref{algo:rhs}). For a $d$th-order tensor $\tensor{A} = \mathbb{TT}(\tensor{A}^{1},\ldots,\tensor{A}^{d})$ with the TT-rank $(r_1,\ldots,r_{d-1})$, the $\texttt{expand}$ operator can be defined as
\begin{equation}\label{eq:expand}
	\texttt{expand}(\tensor{A}, i, m) = \mathbb{TT}(\tensor{A}^{1}, \ldots, \tensor{A}^{i-1},\tensor{I}^{i}, \tensor{A}^{i}, \ldots,\tensor{A}^{d}),
\end{equation}
where $\tensor{I}^{i} \in \mathbb{R}^{r_i\times m\times r_i}$ with $\tensor{I}^{i}_{:,j,:}$ is a $r_i\times r_i$ identity matrix for all $j=0,1,\ldots,m-1$.  The low-rank TT format tensor for the equilibrium PDF is computed by $\tensor{F}^n_{\text{eq}}:=\tensor{E}^{1,n} \odot \tensor{E}^{2,n} \odot \tensor{E}^{3,n}$, where $\odot$ represents the Hadamard product. We summarize the construction of $\tensor{F}^n_{\text{eq}}$ in \Cref{algo:rhs}.
The discretization of the collision frequency $\nu (\bm{x}, t)$ can be calculated by using full tensors $\tensor{\rho}^n$ and $\tensor{T}^n$, resulting in a third-order full tensor. Similar to $\tensor{F}_{\text{eq}}^{i,n}$, we can construct a sixth-order low-rank TT format tensor $\tensor{\nu}^n$ for the collision frequency. Finally, the low-rank TT format tensor $\tensor{Q}^n$ is calculated by
\begin{equation}\label{eq:expand-1}
	\tensor{Q}^n=\frac{1}{\mathrm{Bo}}\tensor{\nu}^n\odot (\tensor{F}^n_{\text{eq}}-\tensor{F}^n).
\end{equation}

\begin{algorithm}
	\normalsize
	\caption{Computing third-order full tensors $\tensor{\rho}^n$, $\tensor{U}^n$, and $\tensor{T}^n$.}
	\label{algo:rho-u-t}
	\begin{algorithmic}[1]
		\Require TT cores of $\tensor{F}^n$: $\tensor{G}^{1,n}\in\mathbb{R}^{m\times r_1}$, $\tensor{G}^{6,n}\in\mathbb{R}^{r_5\times m}$, and $\{\tensor{G}^{i,n}\in\mathbb{R}^{r_{i-1}\times m\times r_i}:i=2,\cdots,5\}$.
		\Ensure Tensors $\tensor{\rho}^n:=\left({\bm{\rho}}^n_{\bm{k}}\right)$, $\tensor{U}^n:=\left(\tensor{U}^{1,n}_{\bm{k}},\tensor{U}^{2,n}_{\bm{k}},\tensor{U}^{3,n}_{\bm{k}}\right)^T$, and $\tensor{T}^n:=\left(\tensor{T}^n_{\bm{k}}\right)$.
		\State Compute a fourth-order tensor $\tensor{Y}^n:=\left(\tensor{Y}^n_{k_1,k_2,k_3,\alpha_3}\right)$ using \eqref{eq:temtensor}.
		\State Calculate the components of the full tensor $\bm{\rho}^{n}$ for the number density:
		\begin{equation}
			\label{eq:integral-rho-n1}
			\tensor{\rho}^n_{k_1,k_2,k_3} =h^3\sum\limits_{\alpha_3,\alpha_4,\alpha_5}\left(\tensor{Y}^n_{k_1,k_2,k_3,\alpha_3}\sum\limits_{l_1,l_2,l_3}\left(\tensor{G}^{4,n}_{\alpha_3,l_1,\alpha_4}\tensor{G}^{5,n}_{\alpha_4,l_2,\alpha_5}\tensor{G}^{6,n}_{\alpha_5,l_3}\right)\right).
		\end{equation}
		\State Obtain the components of the full tensor $\tensor{U}^{i,n}$ for the velocity:
		\begin{equation*}
			\begin{split}
				\tensor{U}_{k_1,k_2,k_3}^{i,n} = \frac{h^3}{\tensor{\rho}^n_{k_1,k_2,k_3}}\sum\limits_{\alpha_3,\alpha_4,\alpha_5}\left(\tensor{Y}^n_{k_1,k_2,k_3,\alpha_3}\sum\limits_{l_1,l_2,l_3}\left(v_i^{l_i}\tensor{G}^{4,n}_{\alpha_3,l_1,\alpha_4}\tensor{G}^{5,n}_{\alpha_4,l_2,\alpha_5}\tensor{G}^{6,n}_{\alpha_5,l_3}\right)\right).
			\end{split}
		\end{equation*}
		\State Compute  the components of the full tensor $\tensor{T}^n$ for the temperature:
		\begin{equation*}
			\begin{split}
				\tensor{T}_{k_1,k_2,k_3}^n &= \frac{\mathrm{Bo} h^3}{3\tensor{\rho}^n_{k_1,k_2,k_3}}\sum\limits_{\alpha_3,\alpha_4,\alpha_5}\Bigg(\tensor{Y}^n_{k_1,k_2,k_3,\alpha_3}\\
				&\qquad\sum\limits_{l_1,l_2,l_3}\left(\sum\limits_{i=1}^3(v_i^{l_i}-\tensor{U}_{k_1,k_2,k_3}^{i,n})^2\tensor{G}^{4,n}_{\alpha_3,l_1,\alpha_4}\tensor{G}^{5,n}_{\alpha_4,l_2,\alpha_5}\tensor{G}^{6,n}_{\alpha_5,l_3}\right)\Bigg).
			\end{split}
		\end{equation*}
	\end{algorithmic}
\end{algorithm}

\begin{algorithm}
	\normalsize
	\caption{Construction of the low-rank TT representation of $\tensor{F}_{\text{eq}}^{n}$.}
	\label{algo:rhs}
	\begin{algorithmic}[1]
		\Require TT format tensor $\tensor{F}^{n}$ and third-order full tensors $\tensor{\rho}^n$, $\tensor{U}^n$, and $\tensor{T}^n$.
		\Ensure TT cores of $\tensor{F}_{\text{eq}}^{n} $.
		\State Compute the fourth-order full tensor $\tensor{F}_{\text{eq}}^{i,n}:=([\tensor{F}_{\text{eq}}^{i,n}]_{k_1,k_2,k_3,l_i})$ with $ (i=1,2,3)$:
		\begin{equation*}
			\begin{split}
				[\tensor{F}_{\text{eq}}^{i,n}]_{k_1,k_2,k_3,l_i}&=\frac{(\tensor{\rho}^{n}_{k_1,k_2,k_3})^{1/3}}{{(2\pi \tensor{T}^{n}_{k_1,k_2,k_3}/\mathrm{Bo})^{1/2}}}\exp{\left(-\mathrm{Bo}\frac{({v}_{i}^{l_{i}}-\tensor{U}^{i,n}_{k_1,k_2,k_3})^{2}}{2\tensor{T}^{n}_{k_1,k_2,k_3}}\right)}.
			\end{split}
		\end{equation*}
		\State Using the classical TT-SVD algorithm to recompress the full tensor $\tensor{F}^{i,n}_{\text{eq}}\ (i=1,2,3)$ into a low-rank TT format tensor:
		\begin{equation*}
			\tensor{F}_{\text{eq}}^{i,n}\approx\mathbb{TT}(\tensor{F}_{\text{eq}}^{i,n}):=\mathbb{TT}(\tensor{G}^{1,i,n}_{\text{eq}},\tensor{G}^{2,i,n}_{\text{eq}},\tensor{G}^{3,i,n}_{\text{eq}},\tensor{G}^{4,i,n}_{\text{eq}}).
		\end{equation*}
		\State Convert $\mathbb{TT}(\tensor{F}_{\text{eq}}^{i,n})$ to the sixth-order tensor $\bm{\mathcal{E}}^{i, n}\ (i=1,2,3)$ as follows:
		\begin{enumerate}
			\item[(a)] Initialize $\tensor{E}^{i,n}:=\mathbb{TT}(\tensor{G}^{1,i,n}_{\text{eq}},\tensor{G}^{2,i,n}_{\text{eq}},\tensor{G}^{3,i,n}_{\text{eq}},\tensor{G}^{4,i,n}_{\text{eq}})$.
			\item[(b)] For $l = 1,2,3$, if $l \not= i$, then compute
			$$
			\tensor{E}^{i,n} = \texttt{expand}\left(\tensor{E}^{i,n}, l+3,m\right).
			$$
		\end{enumerate}
		\State Compute the TT format of $\tensor{F}^n_{\mathrm{eq}}$ by
		\begin{equation}\label{feq-e1-e2-e3}
			\tensor{F}^n_{\mathrm{eq}} = \tensor{E}^{1,n} \odot \tensor{E}^{2,n} \odot \tensor{E}^{3,n}.
		\end{equation}
		
	\end{algorithmic}
\end{algorithm}

\subsection{TT-based low-rank linear solver}\label{sec:tt-solver}

Based on the low-rank TT format of $\tensor{L}$ and the right-hand side $\tensor{R}:=(\tensor{I}+\Delta t\tensor{L})\tensor{F}^{n-1} + 2\Delta t\tensor{Q}^n$, the large-scale linear system \eqref{eq:linear1} can be rewritten into a TT-based low-rank linear system as following:
\begin{equation}\label{TTlowranksystem}
	(\tensor{I}-\Delta t\tensor{L})\tensor{F}^{n+1} = \tensor{R},
\end{equation}
which will be efficiently solved using TT-based low-rank linear solvers. TT-MALS, also known as DMRG \cite{cichocki2016tensor,holtz2012alternating}, is one of the most popular TT-based low-rank linear iterative methods, which updates the TT cores of $\tensor{F}^{n+1}$ in an alternating iterative manner.
The TT-MALS solver reformulates the TT-based low-rank linear system \eqref{TTlowranksystem} as a minimization problem, i.e. $\min\limits_{\tensor{F}^{n+1}}\|(\tensor{I}-\Delta t\tensor{L})\tensor{F}^{n+1} - \tensor{R}\|_F$, then solves the minimization problem by an alternating least squares approach until termination criterion is satisfied. In this paper, the termination criterion is set as
$$
\|(\tensor{I}-\Delta t\tensor{L})\tensor{F}_s^{n+1} - \tensor{R}\|_F\leq \varepsilon_d,
$$
where $\tensor{F}_s^{n+1}$ is the solution at the $s$-th iteration of the TT-MALS solver and $\varepsilon_d$ is a prescribed tolerance.
Since each iteration of the TT-MALS solver only involves the contraction operation on TT tensors and the calculation of small-scale matrix SVD, it has high computational efficiency \cite{holtz2012alternating,oseledets2012solution,rohrig2023performance}.

Since a good initial guess can significantly improve the convergence of the TT-MALS solver, we utilize the solution of an explicit leap-frog scheme as the initial guess for the TT-MALS solver. For linear system \eqref{eq:linear1}, the initial guess is defined as follows:
\begin{equation}\label{eq:initialization}
	\begin{split}
		\tensor{F}_0^{n+1} &=2\Delta t(\tensor{L}\tensor{F}^{n}+\tensor{Q}^n)+\tensor{F}^{n-1},
	\end{split}
\end{equation}
which is obtained with two additions and one matrix-vector product in TT format and still maintains the low TT-rank structure. 
As the CNLF scheme is a two-step scheme, the right-hand side $\tensor{R}$ in \eqref{TTlowranksystem} depends on $\tensor{F}^n$ and $\tensor{F}^{n-1}$. In the first time step, the CNLF scheme is not applicable because $\tensor{F}^{-1}$ is not available.
To ensure the second-order accuracy of the CNLF scheme, a second-order explicit total variation diminishing Runge-Kutta (TVD-RK) scheme is used at time $t_1$ and $\tensor{F}^1$ is calculated as follows
\begin{equation}
	\label{eq:F1-1}
	\begin{split}
		\hat{\tensor{F}}^{1} &= \tensor{F}^0 + \Delta t(\tensor{L}\tensor{F}^0 + \tensor{Q}^0),\\
		\tensor{F}^1 &= \frac{1}{2}\tensor{F}^0+\frac{1}{2}\hat{\tensor{F}}^{1}+\frac{1}{2}\Delta t(\tensor{L}\hat{\tensor{F}}^{1}+\hat{\tensor{Q}}^{1} ).
	\end{split}
\end{equation}
In summary, the overall computational procedure of the APTT solver is concluded in \Cref{algo:framework}.

\begin{algorithm}
	\normalsize
	\caption{Accuracy-Preserved Tensor-Train (APTT) method for solving the Boltzmann-BGK equation.}
	\label{algo:framework}
	\begin{algorithmic}[1]
		\State Initialization:
		Compute the low-rank TT format tensor $\tensor{F}^{0}$ using the initial condition $f(\bm{x},\bm{v},0)$.
		\State Calculate the low-rank TT format tensor ${\tensor{F}}^{1}$ by the TVD-RK scheme \eqref{eq:F1-1}.
		\State For $n=2:1:\frac{t^\star}{\Delta t}-1$ \\
		\begin{enumerate}
			\item[(a)] Compute the low-rank TT format tensor $(\tensor{I}+\Delta t\tensor{L})\tensor{F}^{n-1}$.
			\item[(b)] Construct the low-rank TT format tensor $\tensor{Q}^{n}$ by \Cref{algo:rho-u-t} and \ref{algo:rhs}.
			\item[(c)] Recompress the low-rank TT format $\tensor{R}$ using the TT-rounding algorithm.
			\item[(d)] Solve the linear system
			$
			(\tensor{I}-\Delta t\tensor{L})\tensor{F}^{n+1} = \tensor{R} $
			by the TT-MALS solver with the initial guess $\tensor{F}_0^{n+1}$ defined in \eqref{eq:initialization}.
		\end{enumerate}
	\end{algorithmic}
\end{algorithm}

\section{Complexity and convergence analysis of the APTT solver}\label{analysis}

We will analyze the newly proposed APTT method in terms of complexity and accuracy in this section.
The computational cost of the APTT solver is presented to provide a better understanding of its effectiveness.
The numerical error of the proposed APTT algorithm is established and is bounded by several prescribed accuracy tolerances. Through a careful selection of tolerances in the APTT algorithm, the low-rank solutions of the Boltzmann-BGK equations converge to those of the original discrete system with an explicit convergence rate.

\subsection{Complexity analysis}\label{complexity_analysis}
Let $\bm{r}^n=(r_1^n,r_2^n,r_3^n,r_4^n,r_5^n)^T$ be the TT-rank of the TT format tensor $\tensor{F}^{n}$ and $ r=\max(\|\bm{r}^{n-1}\|_\infty,\|\bm{r}^n\|_\infty)$. In step 3(a) of \Cref{algo:framework}, we compute $(\tensor{I}+\Delta t\tensor{L})\tensor{F}^{n-1}$ based on the matrix-vector product in TT format, whose computational complexity and memory cost are $\mathcal{O}(m^2r^4)$.
Step 3(b) of \Cref{algo:framework} is to construct the low-rank TT format tensor $\tensor{Q}^n$ using \Cref{algo:rho-u-t} and \ref{algo:rhs}. The computational complexities and memory costs for each step of \Cref{algo:rho-u-t} are listed as follows.
\begin{itemize}
	\item[1:] Computational complexity and memory cost for constructing $\tensor{Y}^n$ are $\mathcal{O}(m^3r^2)$ and $\mathcal{O}(m^3r)$, respectively.
	\item[2-4:] Computational complexity and memory cost for the calculation of full tensor $\tensor{\rho}^n, \,\tensor{U}^n, \,\tensor{T}^n$ are $\mathcal{O}(m^3r^2)$  and $\mathcal{O}(m^3)$, respectively.
\end{itemize}

The computational complexities and memory costs for step 1-4 of \Cref{algo:rhs} are summarized as follows.
\begin{itemize}
	\item[1:] Computational complexity and memory cost for calculation of $\tensor{F}_{\text{eq}}^{i,n}$ both are $\mathcal{O}(m^4)$.
	\item[2:] Using the classical TT-SVD algorithm to recompress $\tensor{F}_{\text{eq}}^{i,n}$ into low-rank TT format tensor $\mathbb{TT}(\tensor{F}_{\text{eq}}^{i,n})$, the computational complexity and memory cost are $\mathcal{O}(m^4r+m^3r^2)$  and $\mathcal{O}(mr^2)$, respectively.
	\item[3:] Computational complexity and memory cost of expand operator to get $\tensor{E}^{i,n}$ with $i=1,2,3$ both are $\mathcal{O}(m)$.
	\item[4:] Computational complexity and memory cost for Hadamard product to obtain $\tensor{F}^n_{\mathrm{eq}}$ are $\mathcal{O}(mr^6)$.
\end{itemize}
The computational complexity and memory cost for computing $\tensor{Q}^n$ from \eqref{eq:expand-1} are $\mathcal{O}(m^3r^2)$ and $\mathcal{O}(mr^2)$, respectively. The total computational complexity and memory cost for step 3(b) of \Cref{algo:framework} are $\mathcal{O}(m^4r+m^3r^2+mr^6)$ and $\mathcal{O}(m^4+m^3r+mr^6)$, respectively. In step 3(c) of \Cref{algo:framework}, the computational complexity and memory cost of the TT-rounding algorithm are ${\cal O}(mr^3)$ and  ${\cal O}(mr^2)$, respectively.
{In step 3(d) of \Cref{algo:framework}, we use the TT-MALS solver to solve the involved linear system, which only requires contractions in TT format and small-scale linear system and matrix SVD computations in each iteration.
	{The corresponding computational complexity and memory cost for each iteration are $\mathcal{O}(m^2r^4)$ and $\mathcal{O}(m^2r^2)$, respectively.}
	
	{In summary, at each time step, the overall computational complexity and memory cost of the APTT solver are $\mathcal{O}(m^4r+m^3r^2+mr^6+m^2r^4 \# \mathrm{iter})$ and $\mathcal{O}(m^4+m^3r+m^2r^2+mr^4)$, where $\# \mathrm{iter}$ is the number of iterations of the TT-MALS solver. If $r\leq \sqrt{m}$, it is easy to see that APTT reduces the computational complexity and memory cost by two orders of magnitude in terms of $m$, i.e., from $\mathcal{O}(m^6)$ to $\mathcal{O}(m^4r)$.}
	
	\subsection{Convergence analysis}\label{error_analysis}
	According to the complexity analysis, the efficiency of the APTT algorithm strongly depends on the rank of the TT format. To suppress the growth of the TT-rank caused by linear algebra operations, it is necessary to frequently recompress the TT-rank of tensors in the APTT algorithm. However, this recompression step introduces additional errors, which are on the order of $\varepsilon_b$. Furthermore, the TT-based low-rank linear system is inexactly solved by the TT-MALS solver, which stops when the residual is less than the prescribed tolerance $\varepsilon_d$. Quantifying the impact of these errors on the convergence of solutions obtained by the APTT algorithm is an interesting and challenging task, especially during the time evolution process.

	Let us assume that tensors with tilde, i.e., $\tilde{\tensor{F}}^n$, $\tilde{\tensor{F}}^n_{\mathrm{eq}}$, and $\tilde{\tensor{Q}}^n$, are the exact solutions of \eqref{eq:cnlf}. We then establish the convergence analysis of the APTT algorithm based on the following assumptions.
	\begin{itemize}
		\item[(A1)] $  0<\underline{\rho}\leq\tilde{\tensor{\rho}}^n_{\bm{k}}$ and $0<\underline{T}\leq\tilde{\tensor{T}}^n_{\bm{k}}$. These assumptions correspond to the absence of absolute vacuum and absolute zero temperature in the system.
		\item[(A2)] $\|\tilde{\tensor{F}}^n\|_\infty\leq C$, which is a regularity requirement for the solution of \eqref{eq:cnlf}. As reported in \cite{duan2017global, duan2019boltzmann}, this regularity requirement is held for the Boltzmann equation. To our knowledge, there are no published results supporting this hypothesis for the discrete system \eqref{eq:cnlf}. However, if this hypothesis is not true, the solution of \eqref{eq:cnlf} will blow up, and it is not necessary to perform an error analysis.
		\item[(A3)] The time step size $\Delta t$ is small enough such that $\|(\tensor{I}-\Delta t\tensor{L})^{-1}\|_F=\kappa_1\leq C$ and $\|(\tensor{I}+\Delta t\tensor{L})\|_F=\kappa_2\leq C$.
		\item[(A4)] The constant $C$ in (A2), (A3), and the following of this paper is independent of the time step size $\Delta t$ and tolerances $\varepsilon_b$, $\varepsilon_d$.
	\end{itemize}
	
	The following theorem gives the error bound of the low-rank TT format collision term $\tensor{Q}^n$.
	
	\begin{thm}\label{thm:error-01}
		Based on the assumptions (A1-A4), let $\varepsilon_{b}$ be the tolerance of the classical TT-SVD algorithm. For small enough $h$ satisfying $h^{3/2}\varepsilon_n\leq \underline{\rho}/2$, we have
		\begin{equation}
			\|\tilde{\tensor{Q}}^n-\tensor{Q}^n\|_F\leq C(\varepsilon_n+\varepsilon_{b}),
		\end{equation}
		where $\varepsilon_n=\|\tilde {\tensor{F}}^n-{\tensor{F}}^n\|_F$ and $C$ is a constant independent of $\varepsilon_n$ and $\varepsilon_{b}$.
	\end{thm}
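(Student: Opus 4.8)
The plan is to split $\|\tilde{\tensor{Q}}^n-\tensor{Q}^n\|_F$ according to the two error mechanisms discussed above: the perturbation $\varepsilon_n=\|\tilde{\tensor{F}}^n-\tensor{F}^n\|_F$ of the input PDF tensor, and the recompression tolerance $\varepsilon_b$ of the TT-SVD steps in \Cref{algo:rhs}. Recall that $\tilde{\tensor{Q}}^n$ is obtained from $\tilde{\tensor{F}}^n$ through the exact discrete formulas \eqref{eq:integral-rho}--\eqref{eq:integral-T}, \eqref{eq:equilibrium} and \eqref{eq:collision}, whereas $\tensor{Q}^n$ is the output of \Cref{algo:rho-u-t} and \Cref{algo:rhs} applied to the low-rank tensor $\tensor{F}^n$; in particular $\tensor{Q}^n=\frac{1}{\mathrm{Bo}}\tensor{\nu}^n\odot(\tensor{F}^n_{\mathrm{eq}}-\tensor{F}^n)$ and analogously for $\tilde{\tensor{Q}}^n$. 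Writing
\[
\tilde{\tensor{Q}}^n-\tensor{Q}^n=\frac{1}{\mathrm{Bo}}\Big[(\tilde{\tensor{\nu}}^n-\tensor{\nu}^n)\odot(\tilde{\tensor{F}}^n_{\mathrm{eq}}-\tilde{\tensor{F}}^n)+\tensor{\nu}^n\odot(\tilde{\tensor{F}}^n_{\mathrm{eq}}-\tensor{F}^n_{\mathrm{eq}})-\tensor{\nu}^n\odot(\tilde{\tensor{F}}^n-\tensor{F}^n)\Big]
\]
and bounding each Hadamard product by $\|X\odot Y\|_F\le\|X\|_{\max}\|Y\|_F$, it suffices to estimate (i) the errors of the discrete macroscopic fields $\tensor{\rho}^n,\tensor{U}^n,\tensor{T}^n$, (ii) the error of the collision-frequency tensor $\tensor{\nu}^n$, and (iii) the error of $\tensor{F}^n_{\mathrm{eq}}$, each by $C(\varepsilon_n+\varepsilon_b)$, where by (A4) the constant $C$ may depend on $h$ but not on $\varepsilon_n$, $\varepsilon_b$, or $\Delta t$.

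For step (i) I would apply the Cauchy--Schwarz inequality to the trapezoidal sums \eqref{eq:integral-rho}--\eqref{eq:integral-T}. For the number density this gives $\max_{\bm{k}}|\tilde{\tensor{\rho}}^n_{\bm{k}}-\tensor{\rho}^n_{\bm{k}}|\le h^3m^{3/2}\varepsilon_n$, so the hypothesis $h^{3/2}\varepsilon_n\le\underline{\rho}/2$ together with (A1) keeps $\tensor{\rho}^n_{\bm{k}}$ bounded away from $0$. Using (A2) to bound $\|\tensor{F}^n\|_{\max}$ and the boundedness of the velocity grid, the quotient structure of $\tensor{U}^n$ and $\tensor{T}^n$ then yields $\|\tilde{\tensor{U}}^n-\tensor{U}^n\|_F$, $\|\tilde{\tensor{T}}^n-\tensor{T}^n\|_F\le C\varepsilon_n$, and for $h$ small enough also $\tensor{T}^n_{\bm{k}}\ge\underline{T}/2$. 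Consequently all arguments fed into the Maxwellian and into the collision-frequency law lie in a fixed compact set on which $(\rho,U,T)\mapsto f^i_{\mathrm{eq}}$ and $(\rho,T)\mapsto\nu$ are Lipschitz, so composing with (i) bounds the \emph{analytic} parts of the errors of the full tensors $\tensor{F}^{i,n}_{\mathrm{eq}}$ and of the full collision-frequency tensor by $C\varepsilon_n$.

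For step (iii) I would combine this analytic estimate with the TT-SVD bound $\|\tensor{F}^{i,n}_{\mathrm{eq}}-\mathbb{TT}(\tensor{F}^{i,n}_{\mathrm{eq}})\|_F\le\varepsilon_b\|\tensor{F}^{i,n}_{\mathrm{eq}}\|_F\le C\varepsilon_b$, observe that the \texttt{expand} operator only rescales the Frobenius norm by a factor depending on $m$, and telescope the triple Hadamard product $\tensor{F}^n_{\mathrm{eq}}=\tensor{E}^{1,n}\odot\tensor{E}^{2,n}\odot\tensor{E}^{3,n}$ exactly as in the identity displayed above, using that each factor is uniformly bounded in the entrywise max norm, being a discretized one-dimensional Gaussian. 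This gives $\|\tilde{\tensor{F}}^n_{\mathrm{eq}}-\tensor{F}^n_{\mathrm{eq}}\|_F\le C(\varepsilon_n+\varepsilon_b)$, and the same reasoning applied to the collision frequency gives $\|\tilde{\tensor{\nu}}^n-\tensor{\nu}^n\|_{\max}\le C(\varepsilon_n+\varepsilon_b)$ and $\|\tensor{\nu}^n\|_{\max}\le C$. Substituting these, together with the crude bound $\|\tilde{\tensor{F}}^n_{\mathrm{eq}}-\tilde{\tensor{F}}^n\|_F\le C$, into the telescoped identity for $\tilde{\tensor{Q}}^n-\tensor{Q}^n$ completes the argument.

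The step I expect to be the main obstacle is the bookkeeping that links (i) and (iii): guaranteeing that the smallness hypothesis really does keep $\tensor{\rho}^n$ and $\tensor{T}^n$ uniformly bounded below, so that the Lipschitz constants of the Maxwellian and of $\nu$ stay finite, and verifying that the relative recompression error $\varepsilon_b$ propagates through \texttt{expand} and the Hadamard products while remaining \emph{additive} with respect to $\varepsilon_n$, rather than getting multiplied by quantities that themselves depend on $\varepsilon_n$. The quotient estimates for $\tensor{U}^n$ and $\tensor{T}^n$ in step (i) are the most computation-heavy ingredient but are otherwise routine.
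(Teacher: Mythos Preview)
Your proposal is correct and follows essentially the same route as the paper: bound the moment errors $\tensor{\rho}^n,\tensor{U}^n,\tensor{T}^n$ by $C\varepsilon_n$, use the smallness hypothesis to keep $\tensor{\rho}^n$ (and $\tensor{T}^n$) bounded below so that the Maxwellian and $\nu$ are Lipschitz, add the TT-SVD error $\varepsilon_b$ at the recompression step, and then telescope the triple Hadamard product for $\tensor{F}^n_{\mathrm{eq}}$ and finally for $\tensor{Q}^n$. The only cosmetic difference is that the paper controls the moments in Frobenius norm via mode products (obtaining $\|\tilde{\tensor{\rho}}^n-\tensor{\rho}^n\|_F\le h^{3/2}\varepsilon_n$ directly and then using $|\cdot|\le\|\cdot\|_F$ entrywise), whereas your Cauchy--Schwarz pointwise bound picks up an extra factor $(2\pi)^{3/2}$; this is harmless for the argument but means the stated hypothesis $h^{3/2}\varepsilon_n\le\underline{\rho}/2$ only matches the constant if you switch to the paper's Frobenius-norm route at that step.
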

	\begin{proof}
		It follows from \eqref{eq:integral-rho} and \eqref{eq:integral-rho-n1} that
		\begin{equation}
			\label{eq:thm-1}
			\tilde{\tensor{\rho}}^n=h^3\tilde{\tensor{F}}^n\times_{4}\bm{e}\times_5\bm{e}\times_6\bm{e},\quad {\tensor{\rho}}^n=h^3{\tensor{F}}^n\times_{4}\bm{e}\times_5\bm{e}\times_6\bm{e},
		\end{equation}
		where $\bm{e}=(1,1,\cdots,1)\in\mathbb{R}^{1\times m}$ and $\times_{i}$ denotes mode-$i$ product operation. According to \eqref{eq:thm-1}, we have
		\begin{equation}
			\begin{split}
				\|\tilde {\tensor{\rho}}^n-{\tensor{\rho}}^n\|_F^2&=h^6\|\tilde{\tensor{F}}^n\times_{4}\bm{e}\times_5\bm{e}\times_6\bm{e}-{\tensor{F}}^n\times_{4}\bm{e}\times_5\bm{e}\times_6\bm{e}\|_F^2\\&\leq
				h^5\|\tilde{\tensor{F}}^n\times_{4}\bm{e}\times_5\bm{e}-{\tensor{F}}^n\times_{4}\bm{e}\times_5\bm{e}\|_F^2\\
				&\leq \cdots\leq h^3 \|\tilde{\tensor{F}}^n-{\tensor{F}}^n\|_F^2,
			\end{split}
		\end{equation}
		which implies that $\|\tilde {\tensor{\rho}}^n-{\tensor{\rho}}^n\|_F\leq C\varepsilon_n $ and $\tensor{\rho}_{\bm{k}}^n\geq\tilde{\tensor{\rho}}_{\bm{k}}^n-h^{3/2}\varepsilon_n\geq \underline{\rho}-h^{3/2}\varepsilon_n>0$ for small enough $h$. By setting $\bm{\bar{v}}=(v_1^0,v_1^1,\cdots,v_1^{m-1})$,  we get
		\begin{equation}
			\label{eq:thm-2}
			\begin{split}
				\|\tilde {\tensor{U}}^n\odot\tilde{\tensor{\rho}}^n-{\tensor{U}}^n\odot{\tensor{\rho}}^n\|_F^2&=h^6\|\tilde{\tensor{F}}^n\times_{4}\bm{\bar{v}}\times_5\bm{\bar{v}}\times_6\bm{\bar{v}}-{\tensor{F}}^n\times_{4}\bm{\bar{v}}\times_5\bm{\bar{v}}\times_6\bm{\bar{v}}\|_F^2\\
				&\leq Ch^3\|\tilde{\tensor{F}}^n-{\tensor{F}}^n\|_F^2.
			\end{split}
		\end{equation}
		Due to $\tensor{\rho}_{\bm{k}}^n\geq \underline{\rho}-h^{3/2}\varepsilon_n$ and $\tilde{\tensor{\rho}}_{\bm{k}}^n\geq \underline{\rho}$,
		it follows from  \eqref{eq:thm-2} that
		\begin{equation}
			\begin{split}
				\|\tilde {\tensor{U}}^n-{\tensor{U}}^n\|_F^2&= \|\tilde {\tensor{U}}^n\odot\tilde{\tensor{\rho}}^n\odot\left(\tilde{\tensor{\rho}}^n\right)^{\odot^{-1}}-{\tensor{U}}^n\odot{\tensor{\rho}}^n\odot\left({\tensor{\rho}}^n\right)^{\odot^{-1}}\|_F^2\\
				&\leq\|\tilde {\tensor{U}}^n\odot\tilde{\tensor{\rho}}^n\odot\left(\tilde{\tensor{\rho}}^n\right)^{\odot^{-1}}-\tilde{\tensor{U}}^n\odot\tilde{\tensor{\rho}}^n\odot\left({\tensor{\rho}}^n\right)^{\odot^{-1}}\|_F^2\\
				&~~+\|\tilde{\tensor{U}}^n\odot\tilde{\tensor{\rho}}^n\odot\left({\tensor{\rho}}^n\right)^{\odot^{-1}}-{\tensor{U}}^n\odot{\tensor{\rho}}^n\odot\left({\tensor{\rho}}^n\right)^{\odot^{-1}}\|_F^2\\
				&\leq \frac{\|\tilde{\tensor{U}}^n\odot\tilde{\tensor{\rho}}^n\|^2_\infty }{\underline{\rho}^2\underline{\rho}_0^2}\|\tilde {\tensor{\rho}}^n-{\tensor{\rho}}^n\|_F^2+\frac{1}{\underline{\rho}_0^2}\|\tilde {\tensor{U}}^n\odot\tilde{\tensor{\rho}}^n-{\tensor{U}}^n\odot{\tensor{\rho}}^n\|_F^2\\
				&\leq Ch^3\|\tilde{\tensor{F}}^n-{\tensor{F}}^n\|_F^2,
			\end{split}
		\end{equation}
		where $\underline{\rho}_0=\underline{\rho}-h^{3/2}\varepsilon_n$.
		In a similar way, we can prove the following inequalities
		\begin{equation}
			\label{eq:thm1-ff}
			\|\tilde {\tensor{T}}^n-{\tensor{T}}^n\|_F\leq C\varepsilon_n ,\quad \tensor{T}_{\bm{k}}^n>0, \quad \|\tilde {\tensor{\nu}}^n-{\tensor{\nu}}^n\|_F\leq C\varepsilon_n.
		\end{equation}
		
		For the fourth-order full tensors $\tensor{F}^{i,n}_{\mathrm{eq}}$ with $i=1,2,3$ obtained in step 1 of \Cref{algo:rhs}, we get
		\begin{equation}
			\|\tilde {\tensor{F}}^{i,n}_{\mathrm{eq}}-{\tensor{F}}^{i,n}_{\mathrm{eq}}\|_F\leq C\left(\|\tilde {\tensor{\rho}}^n-{\tensor{\rho}}^n\|_F+\|{\tensor{U}}^n-{\tensor{U}}^n\|_F+\|\tilde{\tensor{T}}^n-{\tensor{T}}^n\|_F\right)\leq C\varepsilon_n.
		\end{equation}
		Then, the sixth-order tensor $\tensor{E}^{i,n}$ satisfies
		\begin{equation}
			\label{eq:thm1-ee}
			\|\tilde {\tensor{E}}^{i,n}-{\tensor{E}}^{i,n}\|_F\leq \|\tilde {\tensor{F}}^{i,n}_{\mathrm{eq}}-{\tensor{F}}^{i,n}_{\mathrm{eq}}\|_F+\|\mathbb{TT}(\tensor{F}_{\text{eq}}^{i,n})-{\tensor{F}}^{i,n}_{\mathrm{eq}}\|_F\leq C\varepsilon_n+\varepsilon_{b}.
		\end{equation}
		It follows form assumptions (A1) and (A2) that $\|\tilde {\tensor{E}}^{i,n}\|_\infty\leq C$. Combining this with \eqref{eq:thm1-ee} implies $\| {\tensor{E}}^{i,n}\|_\infty\leq C$.
		According to \eqref{feq-e1-e2-e3}, we have
		\begin{equation*}
			\begin{split}
				\|\tilde{\tensor{F}}_{\mathrm{eq}}^n-\tensor{F}_{\mathrm{eq}}^n\|_F\leq&\|\tilde{\tensor{E}}^{n,1}\odot\tilde{\tensor{E}}^{n,2}\odot(\tilde{\tensor{E}}^{n,3}-\tensor{E}^{n,3})\|_F+\|\tilde{\tensor{E}}^{n,1}\odot(\tilde{\tensor{E}}^{n,2}-\tensor{E}^{n,2})\odot\tensor{E}^{n,3}\|_F\\ &+\|(\tilde{\tensor{E}}^{n,1}-\tensor{E}^{n,1})\odot{\tensor{E}}^{n,2}\odot{\tensor{E}}^{n,3}\|_F\\
				\leq&\|\tilde{\tensor{E}}^{n,1}\odot\tilde{\tensor{E}}^{n,2}\|_\infty(C\varepsilon_n+\varepsilon_{b})+\|\tilde{\tensor{E}}^{n,1}\|_\infty\|{\tensor{E}}^{n,3}\|_\infty(C\varepsilon_n+\varepsilon_{b})\\
				&+\|{\tensor{E}}^{n,2}\|_\infty\|{\tensor{E}}^{n,3}\|_\infty(C\varepsilon_n+\varepsilon_{b})
				\leq C(\varepsilon_n+\varepsilon_{b}).
			\end{split}
		\end{equation*}
		Based on assumptions (A1) and (A2), we have $\|\tilde{\tensor{\nu}}^{n}\|_\infty\leq C$ and $ \|\tilde{\tensor{F}}^{n}_{\mathrm{eq}}\|_\infty\leq C$. Then, it follows from \eqref{eq:thm1-ff}, the inequality $\|{\tensor{\nu}}^{n}\|_\infty\leq C$ holds.
		Finally, due to \eqref{eq:expand-1}, we obtain
		\begin{equation*}
			\begin{split}
				\|\tilde{\tensor{Q}}^n&-\tensor{Q}^n\|_F\leq\|\frac{{\tensor{\nu}}^{n}}{\mathrm{Kn}}\odot(\tilde{\tensor{F}}^{n}_{\mathrm{eq}}-\tilde{\tensor{F}}^{n}-{\tensor{F}}^{n}_{\mathrm{eq}}+{\tensor{F}}^{n})\|_F+\|(\frac{\tilde{\tensor{\nu}}^{n}}{\mathrm{Kn}}-\frac{{\tensor{\nu}}^{n}}{\mathrm{Kn}})\odot(\tilde{\tensor{F}}^{n}_{\mathrm{eq}}-\tilde{\tensor{F}}^{n})\|_F\\
				\leq&\frac{1}{{\mathrm{Kn}}}\left(\|{{\tensor{\nu}}^{n}}\|_\infty(C\varepsilon_n+\varepsilon_{b}+\varepsilon_n)+\|\tilde{\tensor{F}}^{n}_{\mathrm{eq}}-\tilde{\tensor{F}}^{n}\|_\infty (C\varepsilon_n+\varepsilon_b)\leq C(\varepsilon_n+\varepsilon_{b})\right),
			\end{split}
		\end{equation*}
		which completes the proof of \Cref{thm:error-01}.
	\end{proof}

	The following theorem demonstrates that the APTT method maintains the same convergence rate as that of the discretization scheme by carefully setting tolerances.
	
	\begin{thm}\label{thm:error-02}
		Based on the assumptions (A1-A4), let $\varepsilon_{b}$ be the tolerance of the classical TT-SVD algorithm and $\varepsilon_{d}$ be the tolerance of the TT-MALS algorithm. For small enough $h$ satisfying $h^{3/2}\varepsilon_n\leq \underline{\rho}/2$, it holds that
		\begin{equation}
			\label{eq:thm2-01}
			\varepsilon_{n+1}\leq\varepsilon_{n-1}+C\Delta t (\varepsilon_{n-1}+\varepsilon_n+\varepsilon_b) + C(\varepsilon_b+\varepsilon_d),
		\end{equation}
		where $\varepsilon_n=\|\tilde {\tensor{F}}^n-{\tensor{F}}^n\|_F$ and $C$ is a constant independent of $\varepsilon_n$, $\varepsilon_{b}$, and $\varepsilon_{d}$. Furthermore, if we assume that $\varepsilon_b=\varepsilon_d=(\Delta t)^{1+\varpi}$ with $\varpi>0$,  we have
		\begin{equation}
			\label{eq:thm2-02}
			\varepsilon_{n+1}\leq C n(\Delta t)^{1+\varpi}\leq  C (\Delta t)^{\varpi}.
		\end{equation}
	\end{thm}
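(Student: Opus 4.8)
The plan is to establish the one-step recursion \eqref{eq:thm2-01} first, and then iterate it to obtain \eqref{eq:thm2-02}. For the recursion, I would start from the exact scheme \eqref{eq:cnlf} written in operator form, $(\tensor{I}-\Delta t\tensor{L})\tilde{\tensor{F}}^{n+1} = (\tensor{I}+\Delta t\tensor{L})\tilde{\tensor{F}}^{n-1}+2\Delta t\tilde{\tensor{Q}}^{n}$, and compare it with what the APTT algorithm actually produces at step $n$. The APTT update is $(\tensor{I}-\Delta t\tensor{L})\tensor{F}^{n+1} = \tensor{R}$, where $\tensor{R}$ is the recompressed right-hand side built from the low-rank $\tensor{F}^{n-1}$, $\tensor{F}^n$, and the approximate $\tensor{Q}^n$, and where the linear solve is only carried out until the residual is below $\varepsilon_d$. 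So the true error $\tilde{\tensor{F}}^{n+1}-\tensor{F}^{n+1}$ splits into three contributions: (i) the inexactness of the TT-MALS solve, contributing at most $\|(\tensor{I}-\Delta t\tensor{L})^{-1}\|_F\,\varepsilon_d \le C\varepsilon_d$ by assumption (A3); (ii) the recompression error in forming $\tensor{R}$, which is $O(\varepsilon_b)$ times the norm of $\tensor{R}$, itself bounded by a constant via (A2)--(A3), hence $O(\varepsilon_b)$; and (iii) the propagated error from the previous iterates, $(\tensor{I}-\Delta t\tensor{L})^{-1}\big[(\tensor{I}+\Delta t\tensor{L})(\tilde{\tensor{F}}^{n-1}-\tensor{F}^{n-1}) + 2\Delta t(\tilde{\tensor{Q}}^n-\tensor{Q}^n)\big]$.

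For contribution (iii) I would use (A3) to bound $\|(\tensor{I}-\Delta t\tensor{L})^{-1}\|_F=\kappa_1$. The crucial point is that $\|(\tensor{I}-\Delta t\tensor{L})^{-1}(\tensor{I}+\Delta t\tensor{L})\|_F$ should be written as $\|(\tensor{I}-\Delta t\tensor{L})^{-1}(\tensor{I}-\Delta t\tensor{L}+2\Delta t\tensor{L})\|_F \le 1 + 2\Delta t\,\kappa_1\|\tensor{L}\|_F$, so that the coefficient in front of $\varepsilon_{n-1}$ is $1 + C\Delta t$ rather than merely $C$; this is what makes the leapfrog error accumulate only linearly rather than exponentially in $n$. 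The $\varepsilon_n$ term enters through $\tilde{\tensor{Q}}^n-\tensor{Q}^n$: by \Cref{thm:error-01}, $\|\tilde{\tensor{Q}}^n-\tensor{Q}^n\|_F\le C(\varepsilon_n+\varepsilon_b)$, and this is multiplied by $2\Delta t\,\kappa_1$, producing the $C\Delta t(\varepsilon_n+\varepsilon_b)$ term. Also note that $\tensor{F}^n$ appearing inside $\tensor{Q}^n$ may itself only be the recompressed iterate, but \Cref{thm:error-01} already accounts for this through its $\varepsilon_n$ dependence, so no extra bookkeeping is needed there. Collecting (i)--(iii) and using the triangle inequality yields exactly \eqref{eq:thm2-01}, with the observation that the $\varepsilon_{n-1}$ coefficient is $1+C\Delta t$ (the stray $\Delta t\,\varepsilon_{n-1}$ is absorbed into the $C\Delta t(\varepsilon_{n-1}+\varepsilon_n+\varepsilon_b)$ term).

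For \eqref{eq:thm2-02}, substitute $\varepsilon_b=\varepsilon_d=(\Delta t)^{1+\varpi}$ into \eqref{eq:thm2-01}, giving $\varepsilon_{n+1}\le \varepsilon_{n-1}+C\Delta t(\varepsilon_{n-1}+\varepsilon_n)+C(\Delta t)^{1+\varpi}$. I would then prove by induction on $n$ that $\varepsilon_n \le \tilde C\,n(\Delta t)^{1+\varpi}$ for a suitable constant $\tilde C$, using the base cases $\varepsilon_0$ (the TT-SVD error on the initial data, $O(\varepsilon_b)$) and $\varepsilon_1$ (error from the TVD-RK startup \eqref{eq:F1-1}, also $O((\Delta t)^{1+\varpi})$ after a similar but simpler analysis of the one-stage recompressed scheme). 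In the inductive step, the term $C\Delta t(\varepsilon_{n-1}+\varepsilon_n)\le 2C\tilde C\,n(\Delta t)^{2+\varpi}$ is higher order in $\Delta t$ and is absorbed by increasing $\tilde C$ slightly; this works uniformly for $n\Delta t \le t^\star$ because then $n(\Delta t)^{2+\varpi} = (n\Delta t)(\Delta t)^{1+\varpi}\le t^\star(\Delta t)^{1+\varpi}$. Finally, since $n\le t^\star/\Delta t$, we get $\varepsilon_{n+1}\le C n(\Delta t)^{1+\varpi}\le C t^\star (\Delta t)^{\varpi}$, which is \eqref{eq:thm2-02}.

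The main obstacle I anticipate is the careful treatment of the leapfrog amplification factor: because CNLF is a two-step scheme, a naive bound $\varepsilon_{n+1}\le (1+C\Delta t)(\varepsilon_{n-1}+\varepsilon_n)+\cdots$ would, after iterating over the odd and even subsequences, still give a Gronwall-type factor $e^{Cn\Delta t}=e^{Ct^\star}$, which is fine and bounded — but getting the clean \emph{linear-in-$n$} bound $Cn(\Delta t)^{1+\varpi}$ in \eqref{eq:thm2-02} requires that the recompression/solver error terms $C(\varepsilon_b+\varepsilon_d)$ are \emph{not} multiplied by $(1+C\Delta t)$ at each step but rather simply added, which is why the precise form of \eqref{eq:thm2-01} (with the coefficient exactly $1$ on $\varepsilon_{n-1}$ modulo the $\Delta t$-order terms, and the additive $C(\varepsilon_b+\varepsilon_d)$ sitting outside) matters. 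I would therefore be careful to track which error sources get amplified by the propagation operator and which are injected fresh at each step, and to verify that the inductive constant $\tilde C$ can indeed be chosen independently of $\Delta t$ (using assumption (A4)).
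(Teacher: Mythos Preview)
Your proposal is correct and follows essentially the same route as the paper: the same three-way split of the error (TT-MALS residual, recompression of $\tensor{R}$, propagated error), the same algebraic rewriting $(\tensor{I}+\Delta t\tensor{L})=(\tensor{I}-\Delta t\tensor{L})+2\Delta t\tensor{L}$ to extract the coefficient $1+C\Delta t$ in front of $\varepsilon_{n-1}$, and the same appeal to \Cref{thm:error-01} for the collision term. For \eqref{eq:thm2-02} the paper iterates the recursion directly and pulls out a Gronwall factor $\exp\{Cn\Delta t\}\le\exp\{Ct^\star\}$ (exactly the route you mention in your ``main obstacle'' paragraph) rather than doing an explicit induction, but this is a cosmetic difference only.
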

	
	\begin{proof}
		The right hand side $\tensor{R}$ of \eqref{TTlowranksystem} satisfies
		\begin{equation}
			\label{eq:assumptions-1}
			\begin{split}
				\tilde{\tensor{R}} - \tensor{R} & =\tilde{\tensor{R}}-(\tensor{I}+\Delta t\tensor{L})\tensor{F}^{n-1}-2\Delta t\tensor{Q}^n+{\tensor{R}}-(\tensor{I}+\Delta t\tensor{L})\tensor{F}^{n-1}-2\Delta t\tensor{Q}^n \\&=(\tensor{I}-\Delta t\tensor{L})(\tilde{\tensor{F}}^{n-1} -{\tensor{F}}^{n-1})+2\Delta t[\tensor{L}(\tilde{\tensor{F}}^{n-1} -{\tensor{F}}^{n-1})+\tilde{\tensor{Q}}^{n} -{\tensor{Q}}^{n}] \\
				&~~+{\tensor{R}}-(\tensor{I}+\Delta t\tensor{L})\tensor{F}^{n-1}-2\Delta t\tensor{Q}^n.
			\end{split}
		\end{equation}
		Let $\bar{\tensor{F}}^{n+1}$ be the exact solution of the following linear system
		\[
		(\tensor{I}-\Delta t\tensor{L}){\bar{\tensor{F}}^{n+1}}={\tensor{R}}.
		\]
		It follows from \eqref{eq:assumptions-1} and \Cref{thm:error-01} that
		\begin{equation}\label{eq:assumptions-2}
			\begin{split}
				\|\bar{\tensor{F}}^{n+1}&-\tilde{\tensor{F}}^{n+1}\|_F={\|(\tensor{I}-\Delta t\tensor{L})^{-1}(\tensor{R}-\tilde{\tensor{R}})\|_F}\\
				&\leq \|\tilde{\tensor{F}}^{n-1} -{\tensor{F}}^{n-1}\|_F+2\Delta t\|(\tensor{I}-\Delta t\tensor{L})^{-1}[\tensor{L}(\tilde{\tensor{F}}^{n-1} -{\tensor{F}}^{n-1})+\tilde{\tensor{Q}}^{n} -{\tensor{Q}}^{n}]\|_F\\
				&~~+\|(\tensor{I}-\Delta t\tensor{L})^{-1}[{\tensor{R}}-(\tensor{I}+\Delta t\tensor{L})\tensor{F}^{n-1}-2\Delta t\tensor{Q}^n]\|_F\\
				&\leq \varepsilon_{n-1}+2\Delta t\kappa_1[\kappa_2\varepsilon_{n-1}+C(\varepsilon_n+\varepsilon_b)]+\kappa_1\varepsilon_b\\
				&\leq \varepsilon_{n-1}+C\Delta t (\varepsilon_{n-1}+\varepsilon_n+\varepsilon_b) + C\varepsilon_b.
			\end{split}
		\end{equation}
		Due to the termination criterion of the  TT-MALS algorithm, we have
		\begin{equation}
			\label{eq:assumptions-3}
			\begin{split}
				\|\bar{\tensor{F}}^{n+1}-{\tensor{F}}^{n+1}\|_F &=\| (\tensor{I}-\Delta t \tensor{L})^{-1}\tensor{R}-{\tensor{F}}^{n+1}\|_F \\& \leq \|(\tensor{I}-\Delta t \tensor{L})^{-1}\|_F\| \tensor{R}-(\tensor{I}-\Delta t \tensor{L}){\tensor{F}}^{n+1}\|_F \leq
				\kappa_1 \varepsilon_d,
			\end{split}
		\end{equation}
		where $\varepsilon_d$ is the tolerance of the TT-MALS algorithm.
		Combining \eqref{eq:assumptions-1}, \eqref{eq:assumptions-2}, and \eqref{eq:assumptions-3},
		it implies
		\begin{equation}
			\label{eq:thm2-03}
			\begin{split}
				\varepsilon_{n+1}&=\|\tilde{\tensor{F}}^{n+1}-\tensor{F}^{n+1}\|_F \leq\|\bar{\tensor{F}}^{n+1}-{\tensor{F}}^{n+1}\|_F+\|\bar{\tensor{F}}^{n+1}-\tilde{\tensor{F}}^{n+1}\|_{F}\\
				&\leq \kappa_1\varepsilon_d +\|\bar{\tensor{F}}^{n+1}-\tilde{\tensor{F}}^{n+1}\|_{F}
				\leq\varepsilon_{n-1}+C\Delta t (\varepsilon_{n-1}+\varepsilon_n+\varepsilon_b) + C(\varepsilon_b+\varepsilon_d),
			\end{split}
		\end{equation}
		which completes the proof of \eqref{eq:thm2-01}. If  $\varepsilon_b=\varepsilon_d=(\Delta t)^{1+\varpi}$, it follows from \eqref{eq:thm2-03} that
		\begin{equation*}
			\begin{split}
				\varepsilon_{n+1}=&\leq\varepsilon_{n-1}+C\Delta t (\varepsilon_{n-1}+\varepsilon_n) + C(\Delta t)^{1+\varpi}\\
				&\leq \cdots \leq \exp\{Cn\Delta t  \}\varepsilon_0+Cn(\Delta t)^{1+\varpi}.
			\end{split}
		\end{equation*}
		Due to $n< \frac{t^\star}{\Delta t}$ and $\varepsilon_0=\varepsilon_b=(\Delta t)^{1+\varpi}$, we obtain that
		\begin{equation*}
			\begin{split}
				\varepsilon_{n+1}\leq Cn(\Delta t)^{1+\varpi}\leq C(\Delta t)^{\varpi},
			\end{split}
		\end{equation*}
		which completes the proof of \Cref{thm:error-02}.
	\end{proof}
	
	Based on \Cref{thm:error-02}, we have that the solution of the APTT solver satisfies the conservation laws of mass, momentum, and energy within the given tolerances.
	
	\begin{thm}[Conservation laws]\label{thm:error-04}
		The assumptions are the same as \Cref{thm:error-02}. The solution of the APTT algorithm satisfies the following conservation laws within the error bound $\varepsilon_n$.
		
		1) Conservation law of mass:
		\begin{equation}
			h^3|\left<\tensor{\rho}^n-\tilde{\tensor{\rho}}^0\right>|\leq C\varepsilon_n.
		\end{equation}
		2) Conservation law of momentum:
		\begin{equation}
			h^3|\left<\tensor{\rho}^n{\tensor{U}}^n-\tilde{\tensor{\rho}}^0\tilde{\tensor{U}}^0\right>|\leq C\varepsilon_n.
		\end{equation}
		3) Conservation law of energy:
		\begin{equation}
			h^3|\left<\tensor{W}^n-\tilde{\tensor{W}}^0\right>|\leq C\varepsilon_n,
		\end{equation}
		where energy density function ${\tensor{W}}^n_{\bm{k}}=\frac 1 2\sum\limits_{\bm{l}}{\tensor{F}}^n_{\bm{k},\bm{l}}\|\bm{v}^{\bm{l}}\|_2^2$ and $\left<\tensor{H}\right>=\sum\limits_{\bm{k}}\tensor{H}_{\bm{k}}$ for a tensor $\tensor{H}\in\mathbb{R}^{m\times m\times m}$.
	\end{thm}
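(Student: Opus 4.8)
The plan is to reduce each of the three claims to a bound of the form $|L(\tensor{F}^n)-L(\tilde{\tensor{F}}^n)|\le C\varepsilon_n$ for a suitable \emph{linear} functional $L$, after first checking that $\tilde{\tensor{F}}^n$ conserves $L$ exactly. First I would observe that each conserved total is in fact linear in the PDF tensor: by \eqref{eq:integral-rho} one has $\langle\tensor{\rho}^n\rangle=h^3\sum_{\bm{k},\bm{l}}\tensor{F}^n_{\bm{k},\bm{l}}$; by \eqref{eq:integral-U} the factor $1/\tensor{\rho}^n_{\bm{k}}$ cancels so that $\langle\tensor{\rho}^n\tensor{U}^{n}\rangle=h^3\sum_{\bm{k},\bm{l}}\bm{v}^{\bm{l}}\tensor{F}^n_{\bm{k},\bm{l}}$; and by the definition of $\tensor{W}^n$, $\langle\tensor{W}^n\rangle=\tfrac12\sum_{\bm{k},\bm{l}}\|\bm{v}^{\bm{l}}\|_2^2\tensor{F}^n_{\bm{k},\bm{l}}$, with the analogous identities for the tilde quantities. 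Thus every total equals $\sum_{\bm{k},\bm{l}}\varphi(\bm{v}^{\bm{l}})\tensor{F}^n_{\bm{k},\bm{l}}$ with $\varphi\in\{1,\,v_i,\,\|\bm{v}\|_2^2\}$ up to a fixed prefactor.

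Next I would show the exact discrete solution preserves these functionals. Summing the CNLF scheme \eqref{eq:cnlf} against $\varphi(\bm{v}^{\bm{l}})$ over all $\bm{k},\bm{l}$, the transport contribution $\sum_{\bm{k},\bm{l}}\varphi(\bm{v}^{\bm{l}})[\bm{v}^{\bm{l}}\cdot\nabla_{\bm{x}}\tilde{\tensor{F}}^n_{\bm{k},\bm{l}}]_u$ vanishes because the weights and the velocity factors in \eqref{eq:upwind} are independent of $\bm{k}$, while every column of $\bm{D}^{\pm}$ in \eqref{eq:upwind-mat} sums to $0$ (indeed $3-4+1=0$), so that summation over $\bm{k}$ under the periodic boundary condition annihilates $D^{\pm}_{x_i}\tilde{\tensor{F}}^n$; the artificial-dissipation term of \eqref{eq:modified_upwind} drops out for the same reason, the $\Delta^2$ stencil $\bm{D}$ having zero column sum as well. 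The collision contribution equals, by \eqref{eq:expand-1}, $c\,\tilde{\tensor{\nu}}^n_{\bm{k}}\sum_{\bm{l}}\varphi(\bm{v}^{\bm{l}})\big([\tilde{\tensor{F}}^n_{\mathrm{eq}}]_{\bm{k},\bm{l}}-\tilde{\tensor{F}}^n_{\bm{k},\bm{l}}\big)$, which vanishes because the discrete equilibrium \eqref{eq:equilibrium-2} is assembled from, and reproduces, the discrete moments $\tilde{\tensor{\rho}}^n,\tilde{\tensor{U}}^n,\tilde{\tensor{T}}^n$ of $\tilde{\tensor{F}}^n$: for $\varphi=1,v_i$ this is immediate from \eqref{eq:integral-rho}--\eqref{eq:integral-U}, and for $\varphi=\|\bm{v}\|_2^2$ one uses $h^3\sum_{\bm{l}}\|\bm{v}^{\bm{l}}\|_2^2\tilde{\tensor{F}}^n_{\bm{k},\bm{l}}=\tfrac{3}{\mathrm{Bo}}\tilde{\tensor{\rho}}^n_{\bm{k}}\tilde{\tensor{T}}^n_{\bm{k}}+\tilde{\tensor{\rho}}^n_{\bm{k}}\|\tilde{\tensor{U}}^n_{\bm{k}}\|_2^2$, which is a consequence of \eqref{eq:integral-U}--\eqref{eq:integral-T} and holds verbatim for $\tilde{\tensor{F}}^n_{\mathrm{eq}}$ since it depends only on the shared moments. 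Hence $\sum_{\bm{k},\bm{l}}\varphi(\bm{v}^{\bm{l}})\tilde{\tensor{F}}^{n+1}_{\bm{k},\bm{l}}=\sum_{\bm{k},\bm{l}}\varphi(\bm{v}^{\bm{l}})\tilde{\tensor{F}}^{n-1}_{\bm{k},\bm{l}}$; the same two facts applied to the TVD-RK startup \eqref{eq:F1-1} give the matching identity for $n=1$ versus $n=0$, and induction on the parity of $n$ yields $\langle\tilde{\tensor{\rho}}^n\rangle=\langle\tilde{\tensor{\rho}}^0\rangle$, $\langle\tilde{\tensor{\rho}}^n\tilde{\tensor{U}}^n\rangle=\langle\tilde{\tensor{\rho}}^0\tilde{\tensor{U}}^0\rangle$, $\langle\tilde{\tensor{W}}^n\rangle=\langle\tilde{\tensor{W}}^0\rangle$.

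Finally I would transfer to the APTT iterate. Since $\langle\tensor{\rho}^n-\tilde{\tensor{\rho}}^0\rangle=\langle\tensor{\rho}^n-\tilde{\tensor{\rho}}^n\rangle=h^3\sum_{\bm{k},\bm{l}}(\tensor{F}^n-\tilde{\tensor{F}}^n)_{\bm{k},\bm{l}}$, Cauchy--Schwarz over the $m^6$ entries gives $h^3|\langle\tensor{\rho}^n-\tilde{\tensor{\rho}}^0\rangle|\le h^6 m^3\|\tensor{F}^n-\tilde{\tensor{F}}^n\|_F=(2\pi)^6 m^{-3}\varepsilon_n\le C\varepsilon_n$; feeding the bounded weights $|v_i^{l_i}|\le\pi$ and $\|\bm{v}^{\bm{l}}\|_2^2\le3\pi^2$ into the identical computation gives $h^3|\langle\tensor{\rho}^n\tensor{U}^n-\tilde{\tensor{\rho}}^0\tilde{\tensor{U}}^0\rangle|\le C\varepsilon_n$ and $h^3|\langle\tensor{W}^n-\tilde{\tensor{W}}^0\rangle|\le C\varepsilon_n$, the grid factors combining with the prefactor into an $h$-independent constant (equivalently one can route this through the bounds $\|\tilde{\tensor{\rho}}^n-\tensor{\rho}^n\|_F\le C\varepsilon_n$, $\|\tilde{\tensor{\rho}}^n\tilde{\tensor{U}}^n-\tensor{\rho}^n\tensor{U}^n\|_F\le C\varepsilon_n$, $\|\tilde{\tensor{T}}^n-\tensor{T}^n\|_F\le C\varepsilon_n$ established inside the proof of \Cref{thm:error-01} and then pass from the $\ell^2$-norm to an $\ell^1$-sum). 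The step I expect to be the real obstacle is the \emph{exact} vanishing of the discrete moments of $\tilde{\tensor{Q}}^n$ used above: in the continuous BGK operator this holds by construction of the Maxwellian, but for the sampled Gaussian \eqref{eq:equilibrium-2} it requires the trapezoidal velocity moments of that Gaussian to coincide with $\tilde{\tensor{\rho}}^n,\tilde{\tensor{U}}^n,\tilde{\tensor{T}}^n$; this is true only up to the trapezoidal quadrature error on a rapidly decaying Gaussian, which is super-algebraically small in $h$ and hence dominated by the $\mathcal{O}(\varepsilon_n)$ terms, so it does not spoil the stated estimate --- everything else is telescoping and Cauchy--Schwarz.
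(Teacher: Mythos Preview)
Your proposal is correct and follows essentially the same route as the paper: establish exact discrete conservation for the reference solution $\tilde{\tensor{F}}^n$ by summing \eqref{eq:cnlf} against $\varphi(\bm{v}^{\bm{l}})$ and using periodicity plus the moment-matching of $\tilde{\tensor{Q}}^n$, then pass to the APTT iterate via the Frobenius bound $\|\tilde{\tensor{\rho}}^n-\tensor{\rho}^n\|_F\le C\varepsilon_n$ from \Cref{thm:error-01}. On the point you flag as the ``real obstacle'' --- the exact vanishing of the discrete moments of $\tilde{\tensor{Q}}^n$ --- the paper simply asserts it (``It is easy to check\ldots''), so your more careful observation that this holds only up to a super-algebraically small trapezoidal error on the sampled Gaussian is in fact sharper than what the paper records.
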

	\begin{proof}
		It is easy to check that the discrete collision operator $\tilde{\tensor{Q}}^n$ satisfies conservation law \eqref{conservation-law}, i.e.
		\begin{equation}
			\sum\limits_{\bm{l}}\tilde{\tensor{Q}}^n_{\bm{k},\bm{l}}=0,\quad \sum\limits_{\bm{l}}\tilde{\tensor{Q}}^n_{\bm{k},\bm{l}}\bm{v}^{\bm{l}}=0,\quad \sum\limits_{\bm{l}}\tilde{\tensor{Q}}^n_{\bm{k},\bm{l}}\|\bm{v}^{\bm{l}}\|_2^2=0.
		\end{equation}
		Due to the periodic boundary conditions in $\bm{x}$ direction, we have
		\begin{equation}
			\sum\limits_{\bm{l}}[\bm{v}^{\bm{l}}\cdot\nabla_{\bm{x}}\tilde{\tensor{F}}^n_{\bm{k},\bm{l}}]_u=0,\quad
			\sum\limits_{\bm{l}}[\bm{v}^{\bm{l}}\cdot\nabla_{\bm{x}}\tilde{\tensor{F}}^n_{\bm{k},\bm{l}}]_u\bm{v}^{\bm{l}}=0,\quad
			\sum\limits_{\bm{l}}[\bm{v}^{\bm{l}}\cdot\nabla_{\bm{x}}\tilde{\tensor{F}}^n_{\bm{k},\bm{l}}]_u\|\bm{v}^{\bm{l}}\|^2_2=0.
		\end{equation}
		The exact solution of discrete system \eqref{eq:cnlf} satisfies the following conservation law of mass, momentum, and energy
		\begin{equation}
			\left<\tilde{\tensor{\rho}}^n-\tilde{\tensor{\rho}}^{n-2}\right>=2\Delta t \sum\limits_{\bm{k}}\sum\limits_{\bm{l}}\tilde{\tensor{R}}_{\bm{k},\bm{l}}=0,
		\end{equation}
		\begin{equation}
			\left<\tilde{\tensor{\rho}}^n\tilde{\tensor{U}}^n-\tilde{\tensor{\rho}}^{n-2}\tilde{\tensor{U}}^{n-2}\right>=\sum\limits_{\bm{k}}\sum\limits_{\bm{l}}\left(\tilde{\tensor{F}}^n_{\bm{k},\bm{l}}-\tilde{\tensor{F}}^{n-2}_{\bm{k},\bm{l}}\right)\bm{v}^{\bm{l}}=2\Delta t \sum\limits_{\bm{k}}\sum\limits_{\bm{l}}\tilde{\tensor{R}}_{\bm{k},\bm{l}}\bm{v}^{\bm{l}}=0,
		\end{equation}
		\begin{equation}
			\left<\tilde{\tensor{W}}^n-\tilde{\tensor{W}}^{n-2}\right>=\frac 1 2\sum\limits_{\bm{k}}\sum\limits_{\bm{l}}\left(\tilde{\tensor{F}}^n_{\bm{k},\bm{l}}-\tilde{\tensor{F}}^{n-2}_{\bm{k},\bm{l}}\right)\|\bm{v}^{\bm{l}}\|_2^2=\Delta t \sum\limits_{\bm{k}}\sum\limits_{\bm{l}}\tilde{\tensor{R}}_{\bm{k},\bm{l}}\|\bm{v}^{\bm{l}}\|_2^2=0,
		\end{equation}
		where $\tilde{\tensor{R}}:=- \frac{[\bm{v}^{\bm{l}}\cdot\nabla_{\bm{x}}\tilde{\tensor{F}}^{n}_{\bm{k},\bm{l}}]_u+[\bm{v}^{\bm{l}}\cdot\nabla_{\bm{x}}\tilde{\tensor{F}}^{n-2}_{\bm{k},\bm{l}}]_u}{2} + \tilde{\tensor{Q}}^{n-1}_{\bm{k},\bm{l}}$.
		According to $\|\tilde{\tensor{\rho}}^n-{\tensor{\rho}}^n\|_F\leq C\varepsilon_n$, we get the conservation law of mass for solution $\tensor{F}^n$
		\begin{equation}
			h^3\left|\left<{\tensor{\rho}}^n-\tilde{\tensor{\rho}}^{0}\right>\right|=h^3\left|\left<{\tensor{\rho}}^n-\tilde{\tensor{\rho}}^{n}\right>\right|\leq\|\tilde{\tensor{\rho}}^n-{\tensor{\rho}}^n\|_F\leq C\varepsilon_n.
		\end{equation}
		The conservation law of momentum and energy can be obtained similarly. Then, the proof of \Cref{thm:error-04} is completed.
	\end{proof}
	
	According to \Cref{thm:error-02}, by setting $\varepsilon_b=\varepsilon_d=(\Delta t)^{1+\varpi}$, the convergence rate of the APTT algorithm is $\varpi$. Since the CNLF scheme used in \eqref{eq:cnlf} is second-order, the newly proposed low-rank APTT solver is also second-order by setting $\varpi=3$. This will be verified by numerical simulations performed in \Cref{sec_res}. However, as discussed in subsection \ref{complexity_analysis}, it is crucial to derive bounds on the rank of $\tensor{F}^{n+1}$ for complexity analysis. Since we use the classical TT-SVD algorithm to find the best $\varepsilon_b$-approximation for recompression, the rank of $\tensor{F}^{n+1}$ is usually not the best. As reported in \cite{bachmayr2012adaptive, bachmayr2015adaptive, bachmayr2017kolmogorov}, linear iteration methods involving the best $\varepsilon_b$-approximation usually maintain quasi-optimal ranks, which remain of a similar size to these best approximation ranks. To study the rank of $\tensor{F}^{n+1}$, let us define the maximum TT-ranks of best approximations with error at most $\eta>0$ for tensor $\tensor{A}\in\mathbb{R}^{n_1\times n_2 \cdots\times n_d}$,
	\begin{equation}
		r_{\mathrm{best}}(\tensor{A},\eta)=\min\{r|\exists \tensor{B}\in\mathbb{R}^{n_1\times n_2 \cdots\times n_d}~\hbox{s.t.}~\|\mathrm{rank}(\tensor{B})\|_\infty\leq r ~\hbox{and } \|\tensor{A}-\tensor{B}\|_F\leq \eta\}.
	\end{equation}
	If the tensor $ \tensor{F}^{n+1}$ obtained by the APTT algorithm has relatively large TT-ranks, we then apply the classical TT-SVD algorithm to find the best $\varepsilon_b$-approximation and denote the solution as $\tensor{F}^{n+1}_{\varepsilon_b}$. The quasi-optimal ranks of $\tensor{F}^{n+1}_{\varepsilon_b}$ is guaranteed by the following theorem.

	\begin{thm}\label{thm:error-03}
		The assumptions are the same as \Cref{thm:error-02}. We have
		\begin{equation}
			\label{eq:thm:error-03-01}
			\|\tilde{\tensor{F}}^{n+1}-\tensor{F}^{n+1}_{\varepsilon_b}\|_F\leq \varepsilon_{n+1} + \varepsilon_b,\quad
			\|\mathrm{rank}(\tensor{F}^{n+1}_{\varepsilon_b})\|_\infty\leq r_{\mathrm{best}}(\tilde{\tensor{F}}^{n+1},\eta),
		\end{equation}
		where $\eta=\min(\varepsilon_{n+1},\frac{\varepsilon_b}{10})$.
	\end{thm}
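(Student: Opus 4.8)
The plan is to handle the two assertions of \eqref{eq:thm:error-03-01} separately: the error estimate follows from a single triangle inequality, while the rank estimate is obtained by combining the feasibility of a best $\eta$-approximation of $\tilde{\tensor{F}}^{n+1}$ as an approximation of $\tensor{F}^{n+1}$ with the standard quasi-optimality guarantee of the TT-SVD recompression.

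For the error bound I would write
\[
\|\tilde{\tensor{F}}^{n+1}-\tensor{F}^{n+1}_{\varepsilon_b}\|_F\le\|\tilde{\tensor{F}}^{n+1}-\tensor{F}^{n+1}\|_F+\|\tensor{F}^{n+1}-\tensor{F}^{n+1}_{\varepsilon_b}\|_F,
\]
identify the first term with $\varepsilon_{n+1}$ by definition, and bound the second by $\varepsilon_b$ via the defining property of the $\varepsilon_b$-approximation returned by the TT-SVD algorithm applied to $\tensor{F}^{n+1}$ (invoking (A2) together with $\varepsilon_{n+1}$ being small to control $\|\tensor{F}^{n+1}\|_F$ if the recompression tolerance is taken to be relative). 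This step is routine.

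For the rank estimate I would proceed in three steps. \emph{Step 1.} Recall the quasi-optimality of TT-SVD \cite{oseledets2011tensor}: applied to a $d$th-order tensor $\tensor{A}$ (here $d=2D$) with per-mode truncation threshold $\delta$, it returns a tensor $\tensor{A}_\delta$ with $\|\tensor{A}-\tensor{A}_\delta\|_F\le\sqrt{d-1}\,\delta$ whose $k$-th TT-rank does not exceed the rank of the $k$-th unfolding of any tensor within Frobenius distance $\delta$ of $\tensor{A}$; in particular $\|\mathrm{rank}(\tensor{A}_\delta)\|_\infty\le r_{\mathrm{best}}(\tensor{A},\delta)$. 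Taking $\delta=\varepsilon_b/\sqrt{d-1}$, so that the recompression error is at most $\varepsilon_b$, yields $\|\mathrm{rank}(\tensor{F}^{n+1}_{\varepsilon_b})\|_\infty\le r_{\mathrm{best}}(\tensor{F}^{n+1},\varepsilon_b/\sqrt{d-1})$. \emph{Step 2.} Transfer the best-approximation rank from $\tensor{F}^{n+1}$ to $\tilde{\tensor{F}}^{n+1}$: if $\tensor{B}$ realizes $r_{\mathrm{best}}(\tilde{\tensor{F}}^{n+1},\eta)$, then $\|\tensor{F}^{n+1}-\tensor{B}\|_F\le\|\tensor{F}^{n+1}-\tilde{\tensor{F}}^{n+1}\|_F+\|\tilde{\tensor{F}}^{n+1}-\tensor{B}\|_F\le\varepsilon_{n+1}+\eta$, so $\tensor{B}$ is an admissible competitor for the best $(\varepsilon_{n+1}+\eta)$-approximation of $\tensor{F}^{n+1}$, whence $r_{\mathrm{best}}(\tensor{F}^{n+1},\varepsilon_{n+1}+\eta)\le r_{\mathrm{best}}(\tilde{\tensor{F}}^{n+1},\eta)$. \emph{Step 3.} Verify that the choice $\eta=\min(\varepsilon_{n+1},\varepsilon_b/10)$, together with $d=2D=6$, makes $\varepsilon_{n+1}+\eta\le\varepsilon_b/\sqrt{d-1}$; then monotonicity of $r_{\mathrm{best}}(\tensor{A},\cdot)$ (a larger tolerance never increases the rank) gives $r_{\mathrm{best}}(\tensor{F}^{n+1},\varepsilon_b/\sqrt{d-1})\le r_{\mathrm{best}}(\tensor{F}^{n+1},\varepsilon_{n+1}+\eta)$, and chaining this with Steps 1 and 2 completes the proof.

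The main obstacle is Step 3, the bookkeeping of tolerances: one must ensure that the extra slack $\varepsilon_{n+1}+\eta$ picked up when passing from $\tensor{F}^{n+1}$ to $\tilde{\tensor{F}}^{n+1}$ still fits under the TT-SVD threshold that certifies a recompression error $\le\varepsilon_b$, which is exactly why $\eta$ is capped at $\varepsilon_b/10$ rather than simply set equal to $\varepsilon_{n+1}$, and why $\varepsilon_{n+1}$ is implicitly taken small (consistent with the regime $\varepsilon_b=\varepsilon_d=(\Delta t)^{1+\varpi}$ of \Cref{thm:error-02}). I would also be careful to distinguish the per-mode SVD threshold from the global Frobenius tolerance (the $\sqrt{d-1}$ factor) and relative from absolute tolerances when invoking the TT-SVD estimates of \cite{oseledets2011tensor}, since these conventions fix the constants appearing in both parts of the statement.
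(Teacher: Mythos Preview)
Your approach is essentially the same as the paper's: the first inequality is handled by exactly your triangle inequality, and for the rank bound the paper simply invokes Lemma~5.4 of \cite{bachmayr2023low} with the parameter assignments $\kappa_{\mathbb{E}}=2D-1=5$, $\alpha=1$, $\mathbf{u}=\tilde{\tensor{F}}^{n+1}$, $\mathbf{v}=\tensor{F}^{n+1}$, $\eta=\min(\varepsilon_{n+1},\varepsilon_b/10)$, whereas your Steps~1--3 are precisely an unpacking of that quasi-optimality lemma into its constituent ingredients (TT-SVD quasi-optimality, transfer of the best-approximation rank via the triangle inequality, and matching of tolerances).

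There is, however, a gap in your Step~3. You assert that $\varepsilon_{n+1}+\eta\le\varepsilon_b/\sqrt{d-1}$, but this is \emph{not} implied by the stated hypotheses. When $\varepsilon_{n+1}\ge\varepsilon_b/10$ one has $\eta=\varepsilon_b/10$, so the inequality would force $\varepsilon_{n+1}\le\varepsilon_b/\sqrt{5}-\varepsilon_b/10\approx0.347\,\varepsilon_b$; nothing in the theorem rules out $\varepsilon_{n+1}$ being much larger than $\varepsilon_b$. Your parenthetical that ``$\varepsilon_{n+1}$ is implicitly taken small (consistent with the regime $\varepsilon_b=\varepsilon_d=(\Delta t)^{1+\varpi}$ of \Cref{thm:error-02})'' is actually backwards: in that very regime \Cref{thm:error-02} yields $\varepsilon_{n+1}=\mathcal{O}\bigl((\Delta t)^{\varpi}\bigr)$, which is \emph{larger} than $\varepsilon_b=(\Delta t)^{1+\varpi}$ by a factor of order $1/\Delta t$. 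So the chain in Step~3 does not close as written. The paper sidesteps this by deferring entirely to the external lemma; whether that lemma's hypothesis on $\|\mathbf{u}-\mathbf{v}\|$ is genuinely met with the stated parameters is a question about the paper rather than about your strategy, which is otherwise the correct one.
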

	\begin{proof}
		The first inequality in \eqref{eq:thm:error-03-01} is straightforwardly obtained from \Cref{thm:error-02} and the best $\varepsilon_b$-approximation. The second inequality in \eqref{eq:thm:error-03-01} is a direct application of Lemma 5.4 in \cite{bachmayr2023low} (page 75), where we set $\kappa_{\mathbb{E}}=2D-1=5$, $\alpha=1$, $\mathbf{u}=\tilde{\tensor{F}}^{n+1}$, $\mathbf{v}={\tensor{F}}^{n+1}$, and $\eta=\min(\varepsilon_{n+1},\frac{\varepsilon_b}{10})$.
	\end{proof}
	
	\section{Numerical experiments}\label{sec_res}
	In this section, we perform several numerical experiments to validate the effectiveness and accuracy of the proposed APTT method in solving the Boltzmann-BGK equation. The tensor operations involved in the APTT method are implemented using the TT-Toolbox \cite{oseledets2011matlab}. Three test cases are studied, including a trigonometric initial value problem, a three-dimensional relaxation to statistical equilibrium problem, and a three-dimensional diffusion problem with discontinuous initial density. For comparison, we implement the GMRES algorithm to solve the matrix-vector linear system \eqref{eq:cnlf} and use its solution as the reference solution. All numerical experiments are conducted in MATLAB R2019b on a server with two NUMA nodes, each equipped with an 18-core Intel Xeon Gold CPU running at 2.60 GHz and 256GB DDR3 DRAM.
	
	\subsection{A trigonometric initial value problem}
	The initial PDF $f_0(\bm{x}, \bm{v})$ for this test case is taken as the Maxwellian distribution with macroscopic variables given by
	\begin{equation*}
		\rho_0(\bm{x}) = 1 + 0.5 \prod_{i=1}^D \sin(x_{i}), \quad \bm{U}_0(\bm{x}) = \bm{0}, \quad T_0(\bm{x}) = 1, \quad D=2\hbox{ or~} 3.
	\end{equation*}
	The tolerances in the APTT algorithm and the parameters of this test case are presented in \Cref{table:para-wave2d}.
	The relative error for the APTT algorithm is defined as
	$$\text{Relative error} = \frac{\norm{f_{\text{tt}} - f_{\text{ref}}}{2}}{\norm{f_{\text{ref}}}{2}},$$
	where $f_{\text{ref}}$ is the reference solution obtained by the traditional matrix-vector GMRES solver and $f_{\text{tt}}$ is the low-rank solution of the APTT algorithm.

	\begin{table}
		\centering
		\caption{Parameter setting of Boltzmann-BGK equation in the trigonometric initial value problem.}
		\begin{tabular}{ccc}
			\toprule
			Variable & Value & Description \\
			\midrule
			$\mathrm{K}$ & 1.0 & Collision frequency pre-factor \\
			$\mu$ & 0.5 & Collision frequency temperature exponent \\
			$\mathrm{Kn}$ & 1 & Knudsen number \\
			$\mathrm{Bo}$ & 3.65 & Boltzmann number \\
			$\varepsilon_{b}=\varepsilon_{d}$ & $10^{-6}$ & Tolerances in the APTT algorithm\\
			$\Delta t$ & 0.01 & Time step size\\
			\bottomrule
		\end{tabular}
		\label{table:para-wave2d}
	\end{table}

	\begin{figure}
		\centering
		\includegraphics[width=0.45\linewidth]{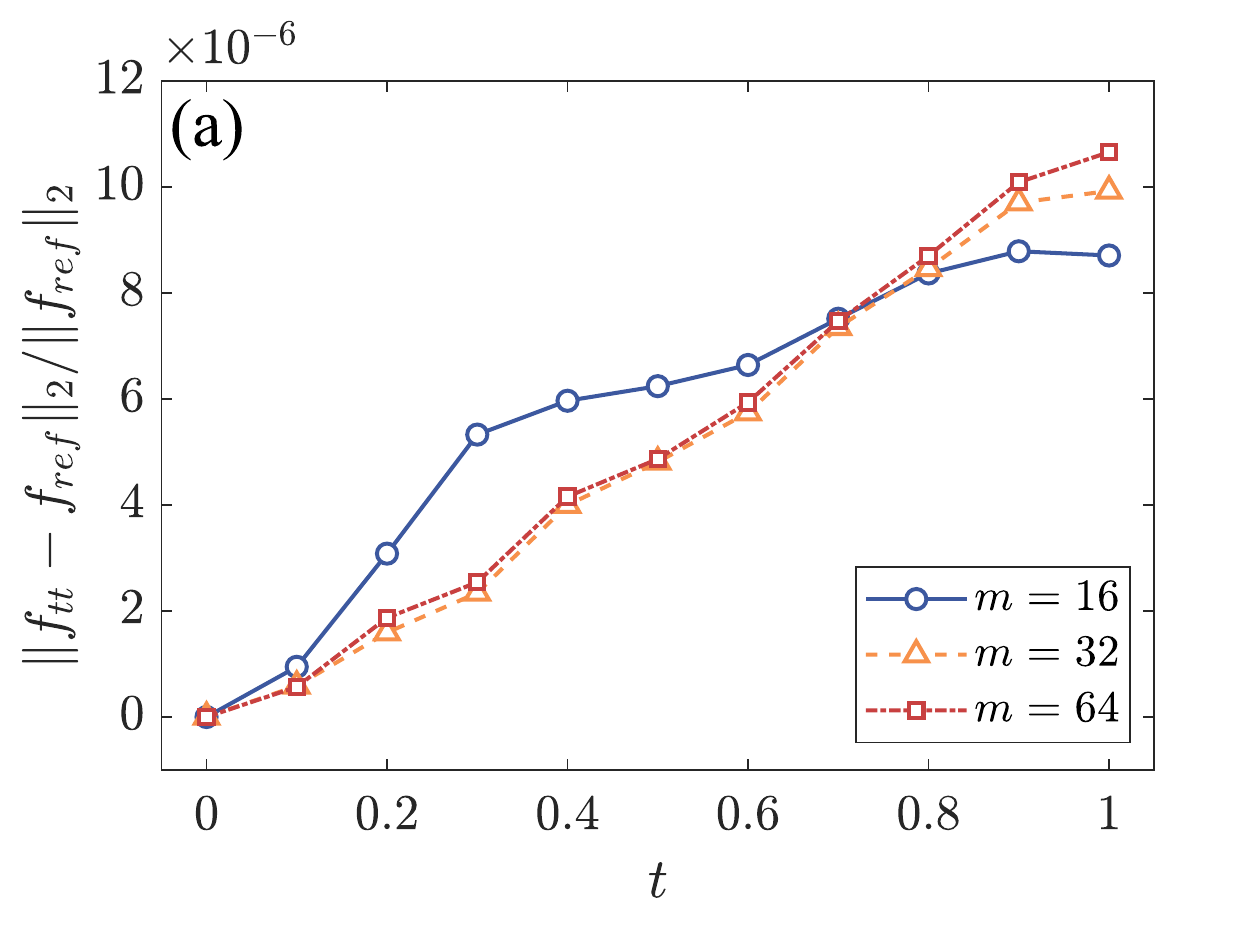}
		\includegraphics[width=0.45\linewidth]{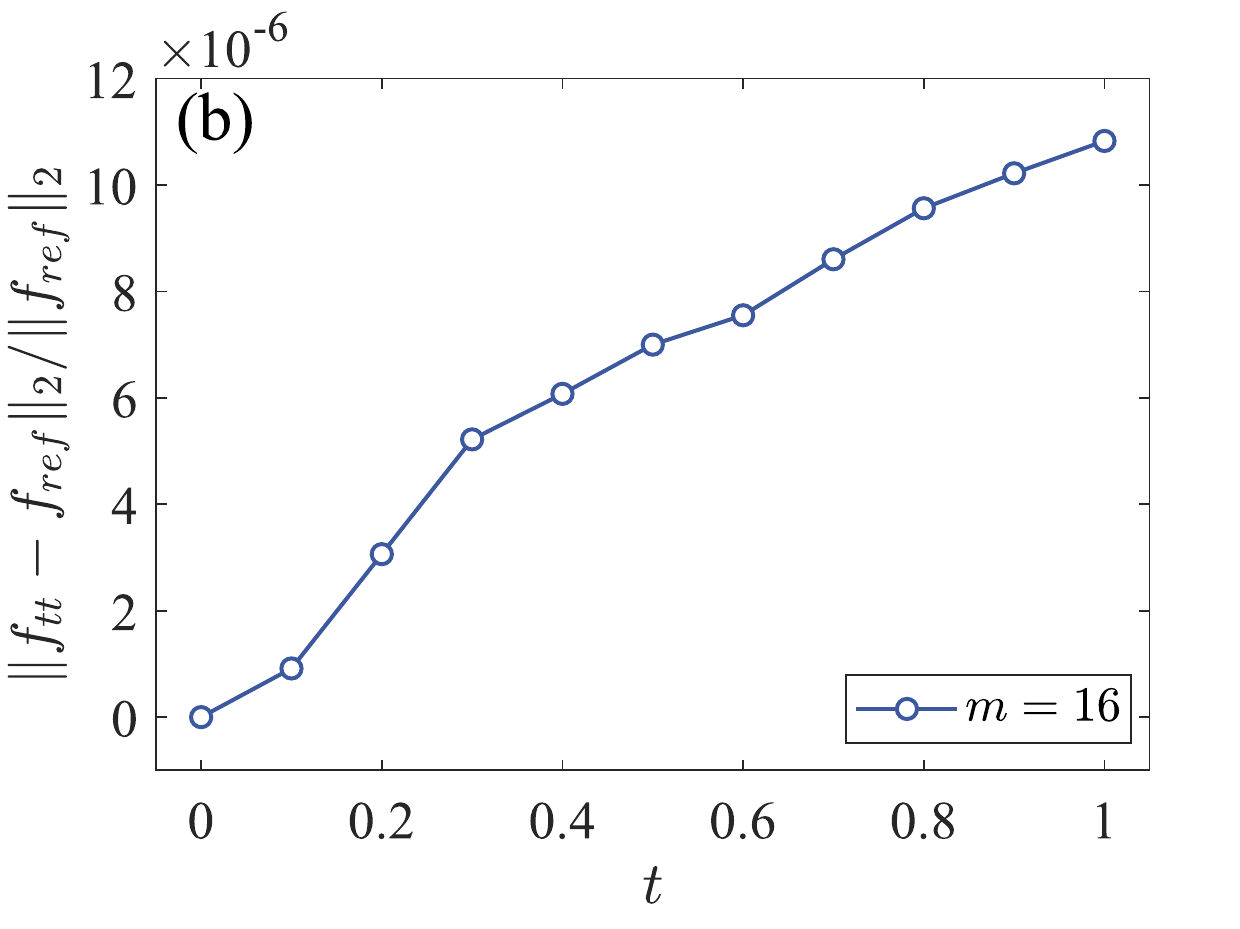}
		\caption{Relative errors for the trigonometric initial value problem. Left: two-dimensional case. Right: three-dimensional case.}
		\label{fig:accurate-compare}
	\end{figure}
	
	We begin by comparing the accuracy performance of the APTT algorithm and the traditional GMRES method. In the case of $D=2$, we conduct simulations for $m=16$, 32, and 64. For $D=3$, we present a simulation with $m=16$. Due to memory limitations of the traditional GMRES method, a comparison with a larger $m$ for $D=3$ is not provided.
	The relative error between the solutions obtained by the two methods with respect to time $t$ is displayed in \Cref{fig:accurate-compare}.  From this figure, we observe that the relative errors for all simulations are in the same order of tolerances $\varepsilon_b$ in the APTT algorithm.
	The macroscopic density, velocity, and temperature of the low-rank solution and reference solution for $D=2$ on a uniform mesh grid with $m=64$ are presented in \Cref{fig:wave-contour}. This indicates that the low-rank solution and reference solution are visually indistinguishable.
	As discussed in \Cref{error_analysis}, the error of the APTT algorithm comes from two main aspects: the approximation error of the low-rank TT format and the discretization error of the finite difference scheme. Since the two methods use the same finite difference scheme, the results in \Cref{fig:accurate-compare} indicate that the approximate error introduced by the low-rank TT format is well controlled by the predefined tolerance in \Cref{table:para-wave2d}. This observation verifies the analysis result \eqref{eq:thm2-01} in \Cref{thm:error-02} and is consistent with motivation for the best $\varepsilon_b$-approximation for the low-rank TT format.

	\begin{figure}
		\centering
		\includegraphics[width=1.00\linewidth]{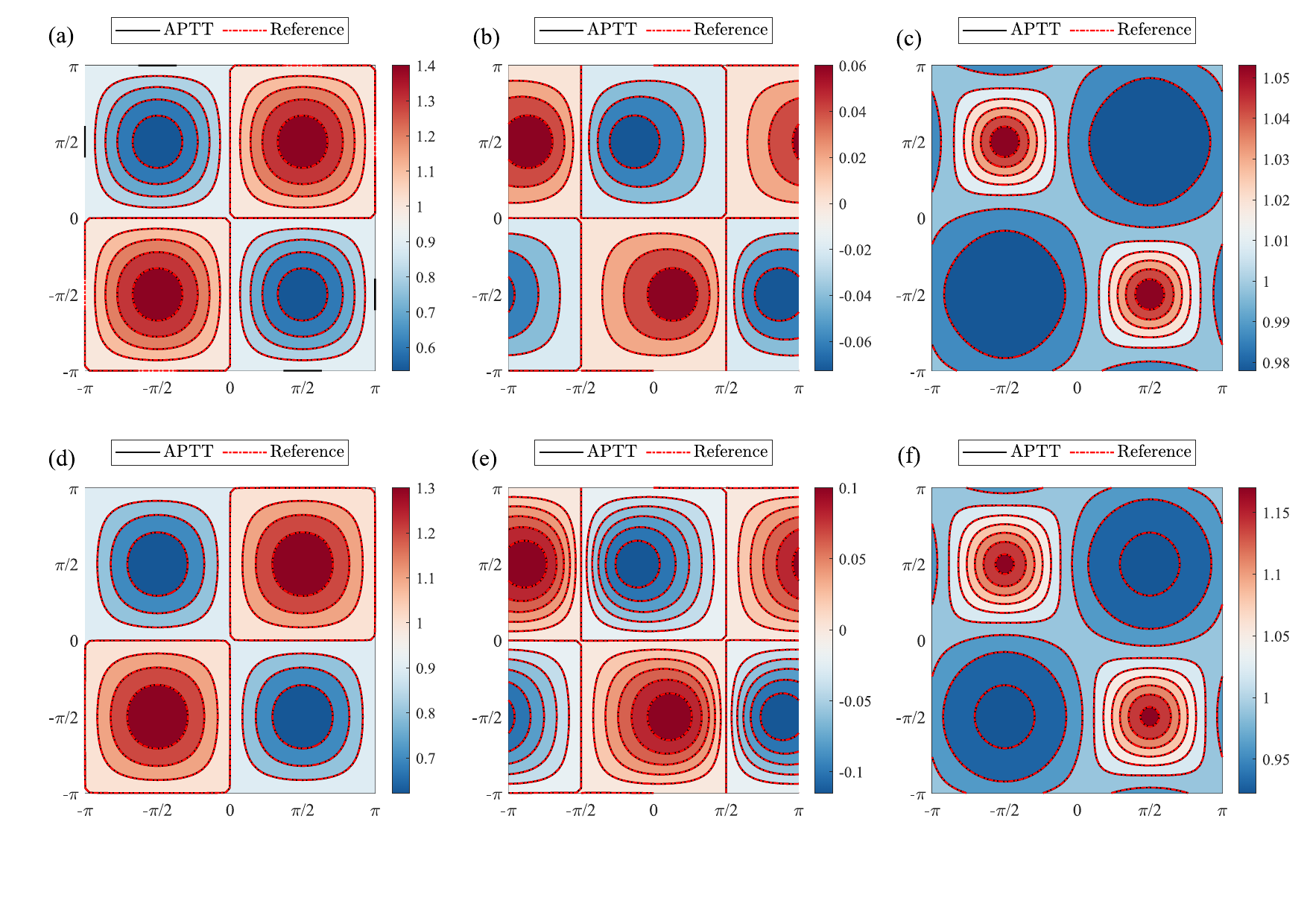}
		\caption{
			The contour plots of the macroscopic variables for the two-dimensional trigonometric initial value problem. The top and bottom rows represent the solutions at $t=0.5$ and $t=1.0$, respectively. The columns, from left to right, correspond to the macroscopic density, velocity on the first axis, and temperature.}
		\label{fig:wave-contour}
	\end{figure}
	
	Next, we investigate the convergence rate of the APTT method to validate the error analysis result \eqref{eq:thm2-02} presented in \Cref{thm:error-02}. Since the CNLF and upwinding schemes both are second-order, we run two-dimensional simulations with time step size $\Delta t= 1/(4m)$ and uniform mesh grid of mesh size $2\pi/m$, where $m$ takes values of 8, 16, 32, 64, and 128, respectively. The tolerances in the APTT algorithm are set to $\varepsilon_b = \varepsilon_d = 10^{-7}$ to ensure $\varepsilon_b = \varepsilon_d = {\cal O}(\Delta t)^3$. The solution obtained by the APTT method on a refined uniform mesh grid with $m=256$ is taken as the reference solution.
	For $t=1$, the $L^2$-norm errors of the solutions obtained by the APTT method with respect to $m$ are illustrated in \Cref{table:order}. It can be observed that the convergence rate of the APTT method is 2, aligning with the error analysis result \eqref{eq:thm2-02}. Thus, by carefully selecting tolerances, the APTT method can preserve the convergence rate of the given discrete scheme.

	\begin{table}[H]
		\centering
		\renewcommand\arraystretch{0.9}
		\addtolength{\tabcolsep}{5.0pt}
		\caption{$L^2$-norm errors with respect to $m$ for trigonometric initial value problem at $t = 1$.}
		\setlength{\tabcolsep}{0.5mm}{\begin{tabular}{c|ccccc}
				\toprule
				$m$ & 8 & 16 & 32 & 64 & 128\\
				\midrule
				$L^2$-norm error & $2.06 \times 10^{-3}$ & $4.78 \times 10^{-4}$ & $1.17 \times 10^{-4}$ & $2.76 \times 10^{-5}$ & $5.57 \times 10^{-6}$ \\
				Order & -- & 2.11 & 2.04 & 2.08 & 2.31\\
				\bottomrule
		\end{tabular}}
		\label{table:order}
	\end{table}

	The comparison of computing times and storage costs between the traditional GMRES solver and the APTT algorithm is presented in \Cref{table:wave2dtime}. To ensure fairness, both methods are set with tolerances of $10^{-6}$. As reported in \Cref{table:wave2dtime}, the computing time and memory cost of the GMRES solver increase with the fourth power of $m$ for $D=2$ and with the sixth power of $m$ for $D=3$, respectively. Due to the rapid increase in computational and memory costs, three-dimensional simulations with $m \geq 32$ using the traditional GMRES solver could not be performed on the given computer.
	The average rank of the low-rank TT format $\tensor{F}^n$ is also reported in \Cref{table:wave2dtime}, which is insensitive to $m$. Since the average rank is comparable to $m$ for $m \leq 64$, according to the computational complexity analysis in \Cref{thm:error-02} for $D=3$, the term $m^2r^4\#\text{iter}$ dominates in the computational cost. This explains why the computing time of the APTT algorithm increases with the square of $m$ for $m \leq 64$. Similar conclusions can be drawn for memory costs. In summary, the APTT method significantly enhances computational efficiency and reduces memory costs compared to the traditional GMRES solver, particularly for fine meshes.

	\begin{table}[H]
		\centering
		\caption{ The trigonometric initial value problem: computing times and memory costs for the traditional GMRES solver (denoted as "Reference") and the APTT algorithm.}
		\begin{tabular}{cccccc}
			\toprule
			&Methods &  & $m=16$ & $m=32$ & $m=64$ \\
			\midrule
			
			\multirow{5}{*}{$D=2$}&\multirow{2}{*}{Reference} & Computing time (s) & 0.70 & 8.82 & 279.12 \\
			&                  & Memory usage (bit) & 15.89M & 259.01M & 4.09G\\
			\cline{2-6}
			&                     \multirow{3}{*}{APTT} & Computing time (s) & 18.24 & 33.94 & 128.22 \\
			&                 & Memory usage (bit) & 665.70K & 2.04M & 8.78M
			\\
			&          & Average rank & 13.67 & 12.33 & 12.33 \\
			\midrule
			\multirow{5}{*}{$D=3$} & \multirow{2}{*}{Reference} & Computing time (s) & 206.71 & -- & -- \\
			&       & Memory usage (bit) & 4.91G & -- & --\\
			\cline{2-6}
			&\multirow{3}{*}{APTT} & Computing time (s) & 145.15 & 497.99 & 2058.64 \\
			&   & Memory usage (bit)  & 5.38M & 32.57M & 406.45M \\
			&    & Average rank & 30.60 & 28.80 & 28.60 \\
			\bottomrule
		\end{tabular}
		\label{table:wave2dtime}
	\end{table}

	\subsection{Relaxation to statistical equilibrium}
	The relaxation to statistical equilibrium problem with $D=3$ is a benchmark for the Boltzmann-BGK equation also studied in  \cite{boelens2020tensor}.  Initially, the PDF of particles is set as the following non-Maxwellian distribution
	\begin{equation}
		f(\bm{x}, \bm{v}, 0) = f_1(\bm{x}, v_1)f_2(\bm{x}, v_2)f_3(\bm{x},v_3),
	\end{equation}
	where
	\begin{equation*}
		f_i(\bm{x}, v_i) = \frac{\sqrt[3]{\rho_0(\bm{x})}}{\sqrt{2\pi T_0(\bm{x})/\mathrm{Bo}}}\exp\left(-\frac{\mathrm{Bo}}{2T_0(\bm{x})}(U_{i,0}(\bm{x})-v_i)^4\right), \hbox{with }i=1,2,3.
	\end{equation*}
	The macroscopic variables are respectively defined as
	\begin{equation*}
		\begin{gathered}
			\rho_0(\bm{x}) = \prod_{i=1}^3 (1 + 0.5\cos(x_i)),\quad  T_0(\bm{x}) = 1 + 0.0025\cos(x_1), \\
			U_{1,0}(\bm{x}) = 1 + 0.025\sin(x_2 - 1),\quad  U_{2,0}(\bm{x}) = 0,\quad U_{3,0}(\bm{x}) = 0.025\sin(x_1-2).
		\end{gathered}
	\end{equation*}
	The Knudsen number is set to $\mathrm{Kn} = 10$, and the time step is set to $0.005$. The other parameters are the same as shown in \Cref{table:para-wave2d}. The computational domain is covered by a uniform grid with $m=64$.
	
	The evelution of the PDF $f(\bm{x}, \bm{v}, t)$ at $\bm{x} = \bm{0}$ is depicted in \Cref{fig:relaxation3d6}. We can observe from \Cref{fig:relaxation3d6} that the PDF at $\bm{x} = \bm{0}$ evolves from a non-Maxwellian distribution to a Maxwellian distribution, which suggests that the PDF converges to the statistical equilibrium.
	\Cref{fig:relaxation3d_nU} and \Cref{fig:relaxation3d_T} presents the density $\rho$, velocity $\bm{U}$, temperature $T$ at different $t$. It is evident that these macroscopic variables deviate from the initial distribution as time progresses.
	
	As stated in \Cref{thm:error-04}, the solution of the APTT algorithm satisfies the conservation laws of mass, momentum, and energy within the specified tolerances.
	To verify this, we plot the relative errors corresponding to these conservation laws in \Cref{fig:relaxation3d1}, from which we observe that the relative errors are all in the same order as  $\varepsilon_n$. Since the error $\varepsilon_n$ introduced by the low-rank APTT algorithm can be bounded by the given tolerances $\varepsilon_b$ and $\varepsilon_d$, reducing the tolerances can further improve the relative errors corresponding to these conservation laws. This should be an advantage compared to the low-rank tensor method proposed in \cite{boelens2020tensor}, which suffers from an amount of mass loss of about $2\%$  per unit time.
	
	\begin{figure}
		\centering
		\includegraphics[width=0.9\linewidth]{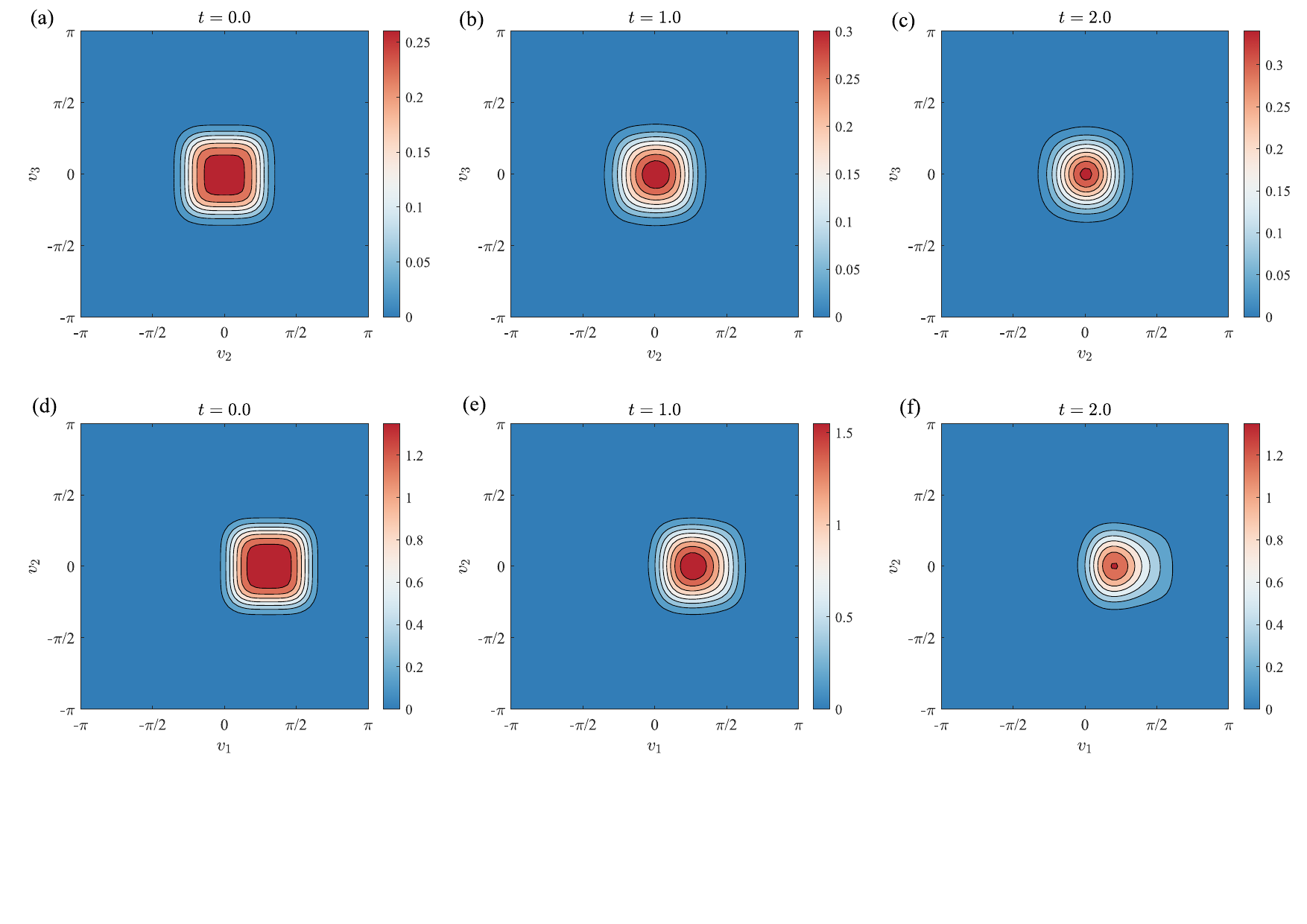}
		\caption{The relaxation to statistical equilibrium problem. Temporal evolution of $f(\bm{x},\bm{v},t)$ in hyper-plane $(\bm{x}, \bm{v}) = (\bm{0}, 0, v_2, v_3)$ and $(\bm{x}, \bm{v}) = (\bm{0}, v_1, v_2, 0)$.}
		\label{fig:relaxation3d6}
	\end{figure}
	
	\begin{figure}
		\centering
		\includegraphics[width=0.9\linewidth]{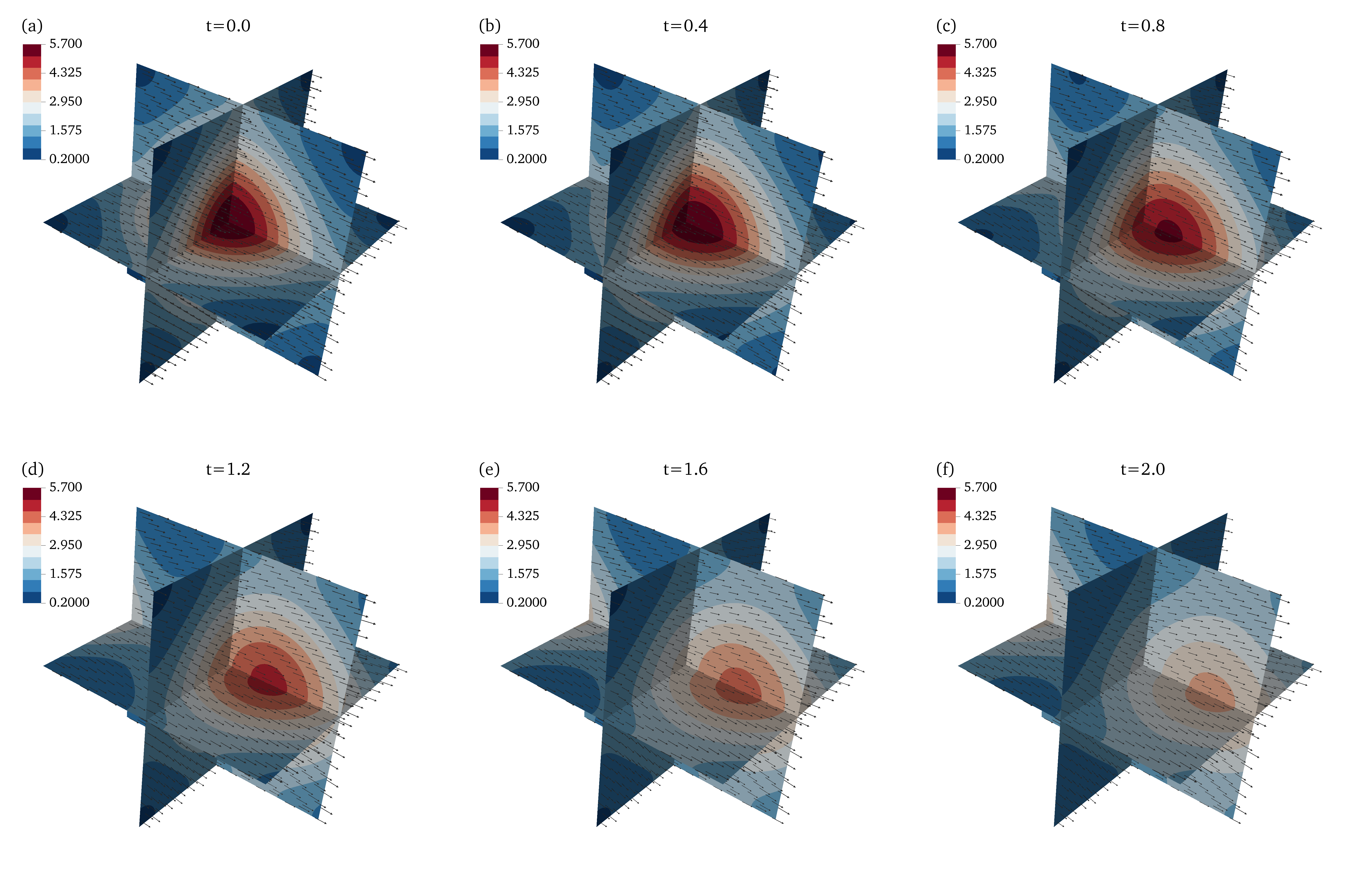}
		\caption{The relaxation to statistical equilibrium problem. Temporal evolution of density $\rho(\bm{x}, t)$ and velocity $\bm{U}(\bm{x}, t)$.}
		\label{fig:relaxation3d_nU}
	\end{figure}
	
	\begin{figure}
		\centering
		\includegraphics[width=0.9\linewidth]{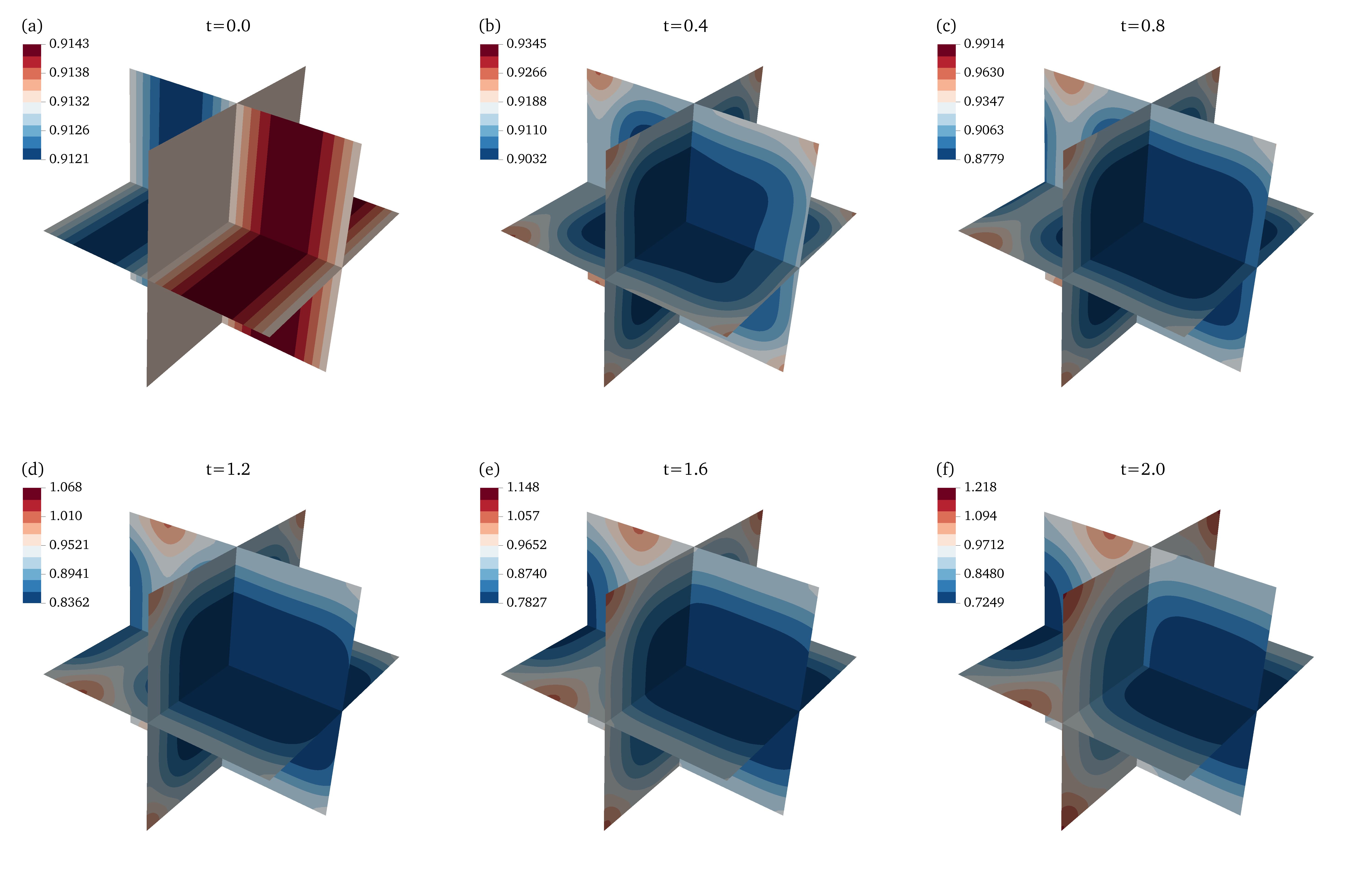}
		\caption{The relaxation to statistical equilibrium problem. Temporal evolution of temperature $T(\bm{x}, t)$.}
		\label{fig:relaxation3d_T}
	\end{figure}
	
	\begin{figure}
		\centering
		\includegraphics[width=1.00\linewidth]{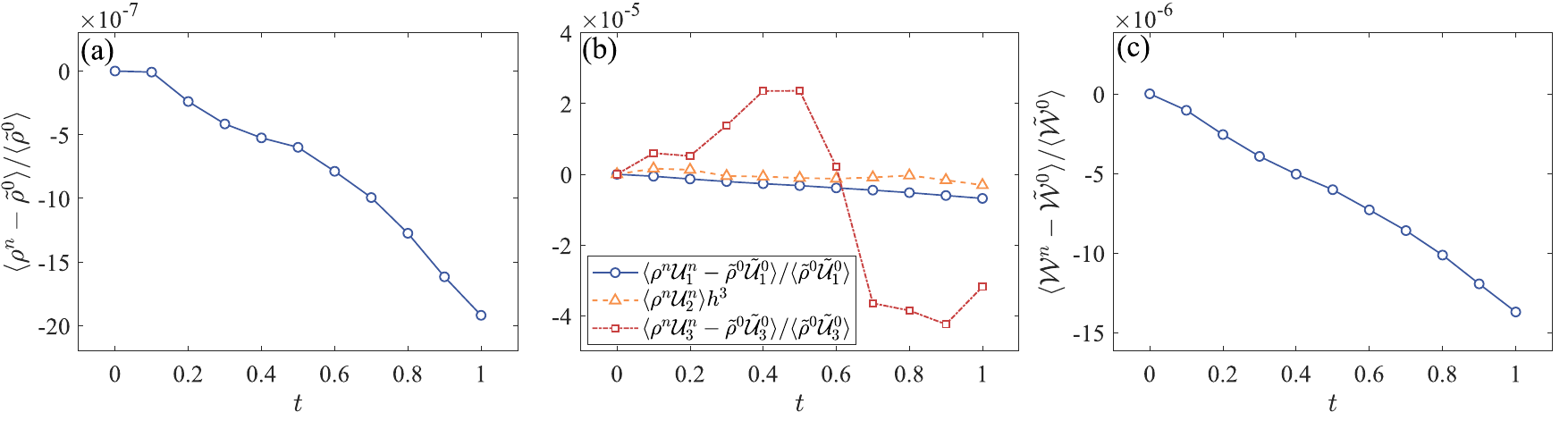}
		\caption{Verification of conservation laws for the relaxation to statistical equilibrium problem. From left to right: conservation law of mass, momentum, and energy. As $\langle\tilde{\tensor{\rho}}^0\tilde{\tensor{U}}^0_2\rangle=0$, the absolute error $\langle\tensor{\rho}^n\tensor{U}_2^n\rangle h^3$ in figure (b).}
		\label{fig:relaxation3d1}
	\end{figure}
	
	\subsection{A diffusion problem with discontinuous initial density}\label{sec_discontinuous_init}
	In the end, we consider a challenging problem with a discontinuous initial density. Specifically, the macroscopic variables at $t=0$ are given by
	\begin{equation*}
		\rho_0(\bm{x}) =
		\left\{
		\begin{array}{ll}
			10, & \max\limits_{i=1,2,3} |x_i| \leq \frac{\pi}{8}, \\
			1,  &\text{otherwise},
		\end{array}
		\right. \quad \bm{U}_0(\bm{x}) = \bm{0}, \quad T_0(\bm{x}) = 1.
	\end{equation*}
	The initial PDF $f(\bm x,\bm v,0)$ is set to the Maxwellian distribution with these macroscopic variables.
	In this test case, the Knudsen number, the tolerances in the APTT algorithm, and the time step size are set as $\mathrm{Kn}=10$, $\varepsilon_b=\varepsilon_d=10^{-5}$, and 0.002, respectively. The other parameters are the same as given in \Cref{table:para-wave2d}.
	To suppress numerical oscillations, we use the discrete system  \eqref{eq:modified_upwind} with artificial dissipation to solve this problem, where $\epsilon$ is equal to $0.1$. A uniform mesh grid with $m=64$ is employed.
	The relative errors corresponding to the conservation laws of mass, momentum, and energy are reported in \Cref{fig:diffusion3d1}, and they are in the same order as $\varepsilon_n$, verifying \Cref{thm:error-04}.
	The evolution of macroscopic variables, including density $\rho(\bm{x}, t)$, velocity $\bm{U}(\bm{x}, t)$ and temperature $T(\bm{x}, t)$ is exhibited in \Cref{fig:diffusion3d_nU} and \Cref{fig:diffusion3d_T}.

	The numerical results demonstrate that the low-rank APTT algorithm produces stable and accurate solutions for the diffusion problem, even in the presence of a discontinuous initial density. Although an artificial dissipation term is introduced in the upwinding scheme to mitigate numerical oscillations, some weak oscillations persist in the numerical solutions shown in \Cref{fig:diffusion3d_nU} and \Cref{fig:diffusion3d_T}. To further suppress these weak numerical oscillations, the APTT algorithm should be combined with a more stable scheme such as the essentially nonoscillatory (ENO) or weighted ENO (WENO) scheme, which is listed as future work.
	
	\begin{figure}
		\centering
		\includegraphics[width=1.00\linewidth]{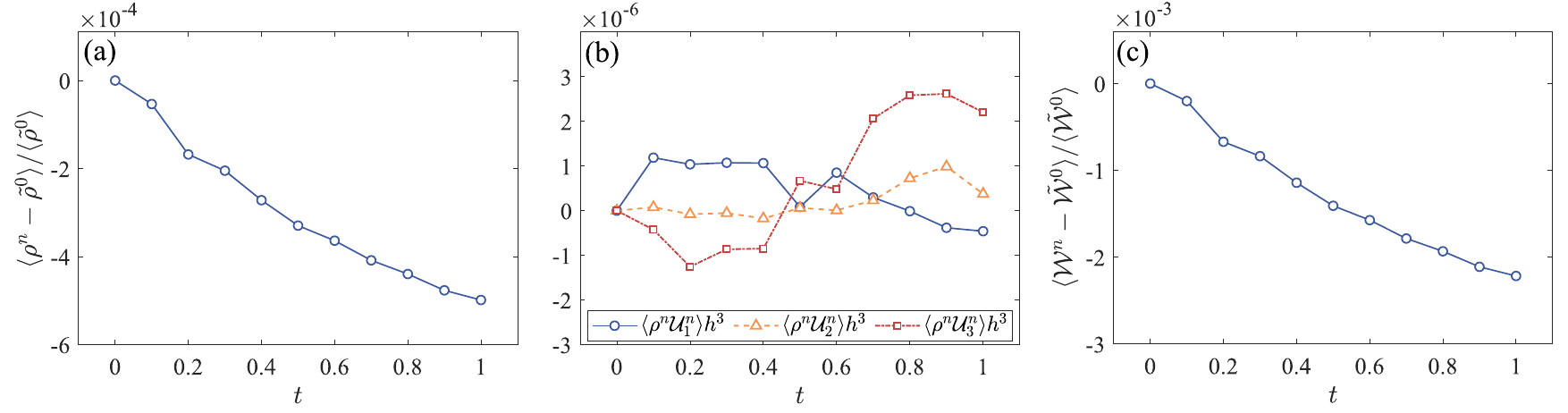}
		\caption{Verification of conservation laws for the diffusion problem with discontinuous initial density. From left to right: conservation law of mass, momentum, and energy. As $\langle\tilde{\tensor{\rho}}^0\tilde{\tensor{U}}^0_i\rangle=0$, the absolute error $\langle\tensor{\rho}^n\tensor{U}_i^n\rangle h^3$ in figure (b).}
		\label{fig:diffusion3d1}
	\end{figure}

	\begin{figure}
		\centering
		\includegraphics[width=0.9\linewidth]{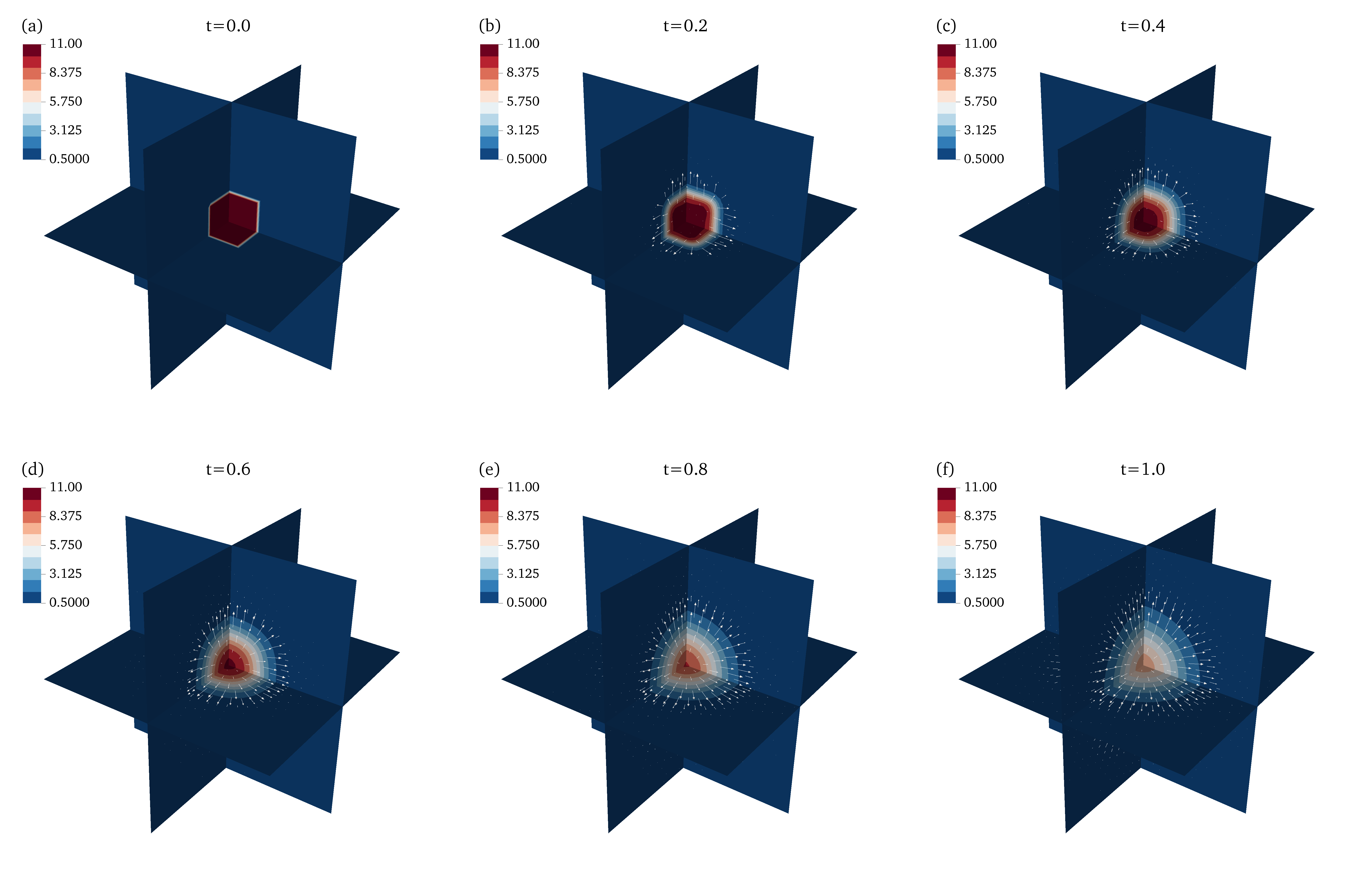}
		\caption{The diffusion problem with discontinuous initial density. Temporal evolution of density $\rho(\bm{x}, t)$ and velocity $\bm{U}(\bm{x}, t)$.}
		\label{fig:diffusion3d_nU}
	\end{figure}
	
	\begin{figure}
		\centering
		\includegraphics[width=0.9\linewidth]{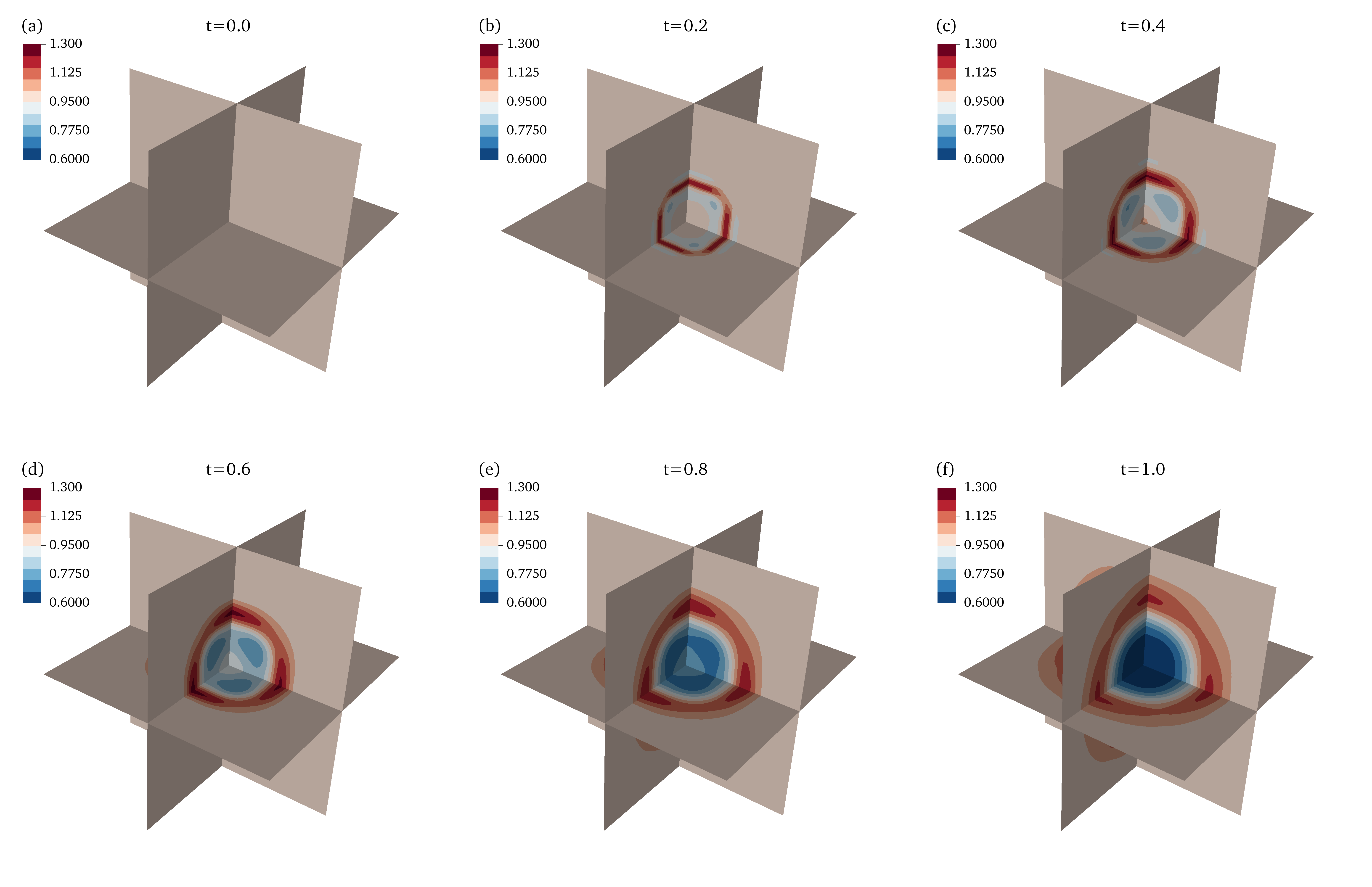}
		\caption{The diffusion problem with discontinuous initial density. Temporal evolution of temperature $T(\bm{x}, t)$.}
		\label{fig:diffusion3d_T}
	\end{figure}

	\section{Conclusion and future work}\label{sec_con}
	To overcome the curse of dimensionality in numerically solving the Boltzmann-BGK equation, the APTT method was developed in this paper.
	After discretizing the Boltzmann-BGK equation by a finite difference scheme, we obtained a tensor algebraic system, whose solution is the PDF and denoted by a full tensor with $m^{2D}$ components.
	To reduce the computational complexity and memory cost, the solutions in the tensor algebraic system were recompressed into low-rank TT format tensors with the lowest TT-rank and satisfying specified accuracy tolerances.
	The nonlinear collision term in the algebraic system was constructed by partially reducing the low-rank TT format tensor of the PDF, resulting in a low-rank TT-based linear system.
	The TT-MALS solver was applied to solve the low-rank TT-based linear system.
	The APTT method reduced the total computational and memory costs by $D-1$ orders of magnitude $m$ compared to traditional methods.
	We demonstrated that the APTT method can maintain the same convergence rate as that of the finite difference scheme and satisfies the conservation laws of mass, momentum, and energy within the prescribed accuracy tolerances.
	Several complex three-dimensional test cases with $m=64$ were performed on a desktop computer, which clearly verifies the efficiency of the newly proposed APTT method.
	There are several possible directions for future work, e.g., applying the APTT method to shock-wave problems and integrating the APTT method into other high-dimensional PDEs.
	
	\section*{Acknowledgments}
	
	C. Xiao has been supported by the Hunan Provincial Natural Science Foundation grant 2023JJ40005 and National Natural Science Foundation of China grant 12131002. K. Tang has been supported by the Hunan Provincial Natural Science Foundation grant 2024JJ6003. J. Huang has been supported by National Natural Science Foundation of China grant 12131002 and 12371439. C. Yang has been supported by National Natural Science Foundation of China grant 12131002.



\bibliographystyle{elsarticle-num}


\bibliography{references.bib}

\begin{thebibliography}{10}
\expandafter\ifx\csname url\endcsname\relax
  \def\url#1{\texttt{#1}}\fi
\expandafter\ifx\csname urlprefix\endcsname\relax\def\urlprefix{URL }\fi
\expandafter\ifx\csname href\endcsname\relax
  \def\href#1#2{#2} \def\path#1{#1}\fi

\bibitem{boyd1995predicting}
I.~D. Boyd, G.~Chen, G.~V. Candler, Predicting failure of the continuum fluid
  equations in transitional hypersonic flows, Physics of Fluids 7~(1) (1995)
  210--219.

\bibitem{sondheimer2001mean}
E.~H. Sondheimer, The mean free path of electrons in metals, Advances in
  Physics 50~(6) (2001) 499--537.

\bibitem{karniadakis2002micro}
G.~E. Karniadakis, A.~Beskok, M.~Gad-el Hak, Micro flows: fundamentals and
  simulation, Applied Mechanics Reviews 55~(4) (2002) B76--B76.

\bibitem{porodnov1974experimental}
B.~Porodnov, P.~Suetin, S.~Borisov, V.~Akinshin, Experimental investigation of
  rarefied gas flow in different channels, Journal of Fluid Mechanics 64~(3)
  (1974) 417--438.

\bibitem{bhatnagar1954model}
P.~L. Bhatnagar, E.~P. Gross, M.~Krook, A model for collision processes in
  gases. {I.} small amplitude processes in charged and neutral one-component
  systems, Physical Review 94~(3) (1954) 511.

\bibitem{de2021physics}
M.~De~Florio, E.~Schiassi, B.~D. Ganapol, R.~Furfaro, Physics-informed neural
  networks for rarefied-gas dynamics: {Thermal} creep flow in the
  {Bhatnagar--Gross--Krook} approximation, Physics of Fluids 33~(4) (2021).

\bibitem{lou2021physics}
Q.~Lou, X.~Meng, G.~E. Karniadakis, Physics-informed neural networks for
  solving forward and inverse flow problems via the {Boltzmann-BGK}
  formulation, Journal of Computational Physics 447 (2021) 110676.

\bibitem{li2024solving}
Z.~Li, Y.~Wang, H.~Liu, Z.~Wang, B.~Dong, Solving {Boltzmann} equation with
  neural sparse representation, SIAM Journal on Scientific Computing 46~(2)
  (2024) C186--C215.

\bibitem{karniadakis2021physics}
G.~E. Karniadakis, I.~G. Kevrekidis, L.~Lu, P.~Perdikaris, S.~Wang, L.~Yang,
  Physics-informed machine learning, Nature Reviews Physics 3~(6) (2021)
  422--440.

\bibitem{hao2022physics}
Z.~Hao, S.~Liu, Y.~Zhang, C.~Ying, Y.~Feng, H.~Su, J.~Zhu, Physics-informed
  machine learning: A survey on problems, methods and applications, arXiv
  preprint arXiv:2211.08064 (2022).

\bibitem{khoromskij2015tensor}
B.~N. Khoromskij, Tensor numerical methods for multidimensional {PDEs}:
  theoretical analysis and initial applications, ESAIM: Proceedings and Surveys
  48 (2015) 1--28.

\bibitem{bachmayr2016tensor}
M.~Bachmayr, R.~Schneider, A.~Uschmajew, Tensor networks and hierarchical
  tensors for the solution of high-dimensional partial differential equations,
  Foundations of Computational Mathematics 16 (2016) 1423--1472.

\bibitem{khoromskij2018tensor}
B.~N. Khoromskij, Tensor numerical methods in scientific computing, Vol.~19,
  Walter de Gruyter GmbH \& Co KG, 2018.

\bibitem{bachmayr2023low}
M.~Bachmayr, Low-rank tensor methods for partial differential equations, Acta
  Numerica 32 (2023) 1--121.

\bibitem{kormann2015semi}
K.~Kormann, A {semi-Lagrangian Vlasov} solver in tensor train format, SIAM
  Journal on Scientific Computing 37~(4) (2015) B613--B632.

\bibitem{ehrlacher2017dynamical}
V.~Ehrlacher, D.~Lombardi, A dynamical adaptive tensor method for the
  {Vlasov--Poisson} system, Journal of Computational Physics 339 (2017)
  285--306.

\bibitem{guo2022low}
W.~Guo, J.-M. Qiu, A low rank tensor representation of linear transport and
  nonlinear {Vlasov} solutions and their associated flow maps, Journal of
  Computational Physics 458 (2022) 111089.

\bibitem{chikitkin2021numerical}
A.~V. Chikitkin, E.~K. Kornev, V.~A. Titarev, Numerical solution of the
  {Boltzmann equation with S-model} collision integral using tensor
  decompositions, Computer Physics Communications 264 (2021) 107954.

\bibitem{boelens2018parallel}
A.~M. Boelens, D.~Venturi, D.~M. Tartakovsky, Parallel tensor methods for
  high-dimensional linear {PDEs}, Journal of Computational Physics 375 (2018)
  519--539.

\bibitem{boelens2020tensor}
A.~M. Boelens, D.~Venturi, D.~M. Tartakovsky, Tensor methods for the
  {Boltzmann-BGK} equation, Journal of Computational Physics 421 (2020) 109744.

\bibitem{dolgov2012fast}
S.~V. Dolgov, B.~N. Khoromskij, I.~V. Oseledets, Fast solution of parabolic
  problems in the tensor train/quantized tensor train format with initial
  application to the {Fokker--Planck} equation, SIAM Journal on Scientific
  Computing 34~(6) (2012) A3016--A3038.

\bibitem{chertkov2021solution}
A.~Chertkov, I.~Oseledets, Solution of the {Fokker--Planck} equation by cross
  approximation method in the tensor train format, Frontiers in Artificial
  Intelligence 4 (2021) 668215.

\bibitem{platkowski1988discrete}
T.~Platkowski, R.~Illner, Discrete velocity models of the {Boltzmann} equation:
  a survey on the mathematical aspects of the theory, SIAM Review 30~(2) (1988)
  213--255.

\bibitem{palczewski1997consistency}
A.~Palczewski, J.~Schneider, A.~V. Bobylev, A consistency result for a
  discrete-velocity model of the {Boltzmann} equation, SIAM Journal on
  Numerical Analysis 34~(5) (1997) 1865--1883.

\bibitem{oseledets2011tensor}
I.~V. Oseledets, Tensor-train decomposition, SIAM Journal on Scientific
  Computing 33~(5) (2011) 2295--2317.

\bibitem{holtz2012alternating}
S.~Holtz, T.~Rohwedder, R.~Schneider, The alternating linear scheme for tensor
  optimization in the tensor train format, SIAM Journal on Scientific Computing
  34~(2) (2012) A683--A713.

\bibitem{bird1970direct}
G.~Bird, Direct simulation and the {Boltzmann} equation, The Physics of Fluids
  13~(11) (1970) 2676--2681.

\bibitem{bird1994molecular}
G.~A. Bird, Molecular gas dynamics and the direct simulation of gas flows,
  Molecular gas dynamics and the direct simulation of gas flows (1994).

\bibitem{rjasanow1996stochastic}
S.~Rjasanow, W.~Wagner, A stochastic weighted particle method for the
  {Boltzmann} equation, Journal of Computational Physics 124~(2) (1996)
  243--253.

\bibitem{rjasanow1998reduction}
S.~Rjasanow, T.~Schreiber, W.~Wagner, Reduction of the number of particles in
  the stochastic weighted particle method for the {Boltzmann} equation, Journal
  of Computational Physics 145~(1) (1998) 382--405.

\bibitem{wu2014solving}
L.~Wu, J.~M. Reese, Y.~Zhang, Solving the {Boltzmann} equation
  deterministically by the fast spectral method: application to gas microflows,
  Journal of Fluid Mechanics 746 (2014) 53--84.

\bibitem{pareschi2000stability}
L.~Pareschi, G.~Russo, On the stability of spectral methods for the homogeneous
  {Boltzmann} equation, Transport Theory and Statistical Physics 29~(3-5)
  (2000) 431--447.

\bibitem{wang2019approximation}
Y.~Wang, Z.~Cai, Approximation of the {Boltzmann} collision operator based on
  hermite spectral method, Journal of Computational Physics 397 (2019) 108815.

\bibitem{panferov2002new}
V.~A. Panferov, A.~G. Heintz, A new consistent discrete-velocity model for the
  {Boltzmann} equation, Mathematical Methods in the Applied Sciences 25~(7)
  (2002) 571--593.

\bibitem{mouhot2006fast}
C.~Mouhot, L.~Pareschi, Fast algorithms for computing the {Boltzmann} collision
  operator, Mathematics of Computation 75~(256) (2006) 1833--1852.

\bibitem{cai2015approximation}
Z.~Cai, M.~Torrilhon, Approximation of the linearized {Boltzmann} collision
  operator for hard-sphere and inverse-power-law models, Journal of
  Computational Physics 295 (2015) 617--643.

\bibitem{gamba2018galerkin}
I.~M. Gamba, S.~Rjasanow, {Galerkin--Petrov} approach for the {Boltzmann}
  equation, Journal of Computational Physics 366 (2018) 341--365.

\bibitem{han2019uniformly}
J.~Han, C.~Ma, Z.~Ma, W.~E, Uniformly accurate machine learning-based
  hydrodynamic models for kinetic equations, Proceedings of the National
  Academy of Sciences 116~(44) (2019) 21983--21991.

\bibitem{huang2022machine}
J.~Huang, Y.~Cheng, A.~J. Christlieb, L.~F. Roberts, Machine learning moment
  closure models for the radiative transfer equation {I}: directly learning a
  gradient based closure, Journal of Computational Physics 453 (2022) 110941.

\bibitem{li2023learning}
Z.~Li, B.~Dong, Y.~Wang, Learning invariance preserving moment closure model
  for {Boltzmann--BGK} equation, Communications in Mathematics and Statistics
  11~(1) (2023) 59--101.

\bibitem{miller2022neural}
S.~T. Miller, N.~V. Roberts, S.~D. Bond, E.~C. Cyr, Neural-network based
  collision operators for the {Boltzmann} equation, Journal of Computational
  Physics 470 (2022) 111541.

\bibitem{schotthofer2022structure}
S.~Schotth{\"o}fer, T.~Xiao, M.~Frank, C.~D. Hauck, Structure preserving neural
  networks: {A} case study in the entropy closure of the {Boltzmann} equation,
  in: Proceedings of the International Conference on Machine Learning, PMLR,
  Baltimore, MD, USA, 2022, pp. 17--23.

\bibitem{xiao2021using}
T.~Xiao, M.~Frank, Using neural networks to accelerate the solution of the
  {Boltzmann} equation, Journal of Computational Physics 443 (2021) 110521.

\bibitem{xiao2023relaxnet}
T.~Xiao, M.~Frank, {RelaxNet:} {A} structure-preserving neural network to
  approximate the {Boltzmann} collision operator, Journal of Computational
  Physics (2023) 112317.

\bibitem{li2022physics}
R.~Li, J.-X. Wang, E.~Lee, T.~Luo, Physics-informed deep learning for solving
  phonon {Boltzmann} transport equation with large temperature non-equilibrium,
  NPJ Computational Materials 8~(1) (2022) 29.

\bibitem{Goodfellow-et-al-2016}
I.~Goodfellow, Y.~Bengio, A.~Courville, Deep Learning, MIT Press, 2016.

\bibitem{cercignani1988boltzmann}
C.~Cercignani, C.~Cercignani, The {Boltzmann} equation, Springer, 1988.

\bibitem{cercignani1997many}
C.~Cercignani, V.~Gerasimenko, D.~Y. Petrina, Many-particle dynamics and
  kinetic equations, Vol. 420, Springer Science \& Business Media, 1997.

\bibitem{dimarco2014numerical}
G.~Dimarco, L.~Pareschi, Numerical methods for kinetic equations, Acta Numerica
  23 (2014) 369--520.

\bibitem{asselin1972frequency}
R.~Asselin, Frequency filter for time integrations, Monthly Weather Review
  100~(6) (1972) 487--490.

\bibitem{thomas2005ncar}
S.~J. Thomas, R.~D. Loft, The {NCAR} spectral element climate dynamical core:
  Semi-implicit {Eulerian} formulation, Journal of Scientific Computing 25~(1)
  (2005) 307--322.

\bibitem{williams2011raw}
P.~D. Williams, The {RAW} filter: {An} improvement to the {Robert--Asselin}
  filter in semi-implicit integrations, Monthly Weather Review 139~(6) (2011)
  1996--2007.

\bibitem{kubacki2013uncoupling}
M.~Kubacki, Uncoupling evolutionary groundwater-surface water flows using the
  {Crank--Nicolson Leapfrog} method, Numerical Methods for Partial Differential
  Equations 29~(4) (2013) 1192--1216.

\bibitem{layton2012stability}
W.~Layton, C.~Trenchea, Stability of two {IMEX} methods, {CNLF} and {BDF2-AB2},
  for uncoupling systems of evolution equations, Applied Numerical Mathematics
  62~(2) (2012) 112--120.

\bibitem{jiang2015crank}
N.~Jiang, M.~Kubacki, W.~Layton, M.~Moraiti, H.~Tran, A {Crank--Nicolson
  Leapfrog} stabilization: {Unconditional} stability and two applications,
  Journal of Computational and Applied Mathematics 281 (2015) 263--276.

\bibitem{kazeev2012low}
V.~A. Kazeev, B.~N. Khoromskij, Low-rank explicit {QTT} representation of the
  {L}aplace operator and its inverse, SIAM Journal on Matrix Analysis and
  Applications 33~(3) (2012) 742--758.

\bibitem{lee2018fundamental}
N.~Lee, A.~Cichocki, Fundamental tensor operations for large-scale data
  analysis using tensor network formats, Multidimensional Systems and Signal
  Processing 29 (2018) 921--960.

\bibitem{strikwerda2004finite}
J.~C. Strikwerda, Finite difference schemes and partial differential equations,
  SIAM, 2004.

\bibitem{saad1986gmres}
Y.~Saad, M.~H. Schultz, {GMRES}: {A} generalized minimal residual algorithm for
  solving nonsymmetric linear systems, SIAM Journal on Scientific and
  Statistical Computing 7~(3) (1986) 856--869.

\bibitem{van1992bi}
H.~A. Van~der Vorst, {Bi-CGSTAB:} a fast and smoothly converging variant of
  {Bi-CG} for the solution of nonsymmetric linear systems, SIAM Journal on
  scientific and Statistical Computing 13~(2) (1992) 631--644.

\bibitem{dolgov2013tt}
S.~V. Dolgov, {TT-GMRES}: solution to a linear system in the structured tensor
  format, Russian Journal of Numerical Analysis and Mathematical Modelling
  28~(2) (2013) 149--172.

\bibitem{ehrlacher2022sott}
V.~Ehrlacher, M.~F. Ruiz, D.~Lombardi, {SoTT}: greedy approximation of a tensor
  as a sum of tensor trains, SIAM Journal on Scientific Computing 44~(2) (2022)
  A664--A688.

\bibitem{al2022parallel}
H.~Al~Daas, G.~Ballard, L.~Manning, Parallel tensor train rounding using {Gram
  SVD}, in: 2022 IEEE International Parallel and Distributed Processing
  Symposium (IPDPS), IEEE, 2022, pp. 930--940.

\bibitem{daas2022parallel}
H.~A. Daas, G.~Ballard, P.~Benner, Parallel algorithms for tensor train
  arithmetic, SIAM Journal on Scientific Computing 44~(1) (2022) C25--C53.

\bibitem{cichocki2016tensor}
A.~Cichocki, N.~Lee, I.~Oseledets, A.-H. Phan, Q.~Zhao, D.~P. Mandic, et~al.,
  Tensor networks for dimensionality reduction and large-scale optimization:
  {Part} 1 low-rank tensor decompositions, Foundations and
  Trends{\textregistered} in Machine Learning 9~(4-5) (2016) 249--429.

\bibitem{espig2020iterative}
M.~Espig, W.~Hackbusch, A.~Litvinenko, H.~G. Matthies, E.~Zander, Iterative
  algorithms for the post-processing of high-dimensional data, Journal of
  Computational Physics 410 (2020) 109396.

\bibitem{oseledets2012solution}
I.~V. Oseledets, S.~V. Dolgov, Solution of linear systems and matrix inversion
  in the {TT-format}, SIAM Journal on Scientific Computing 34~(5) (2012)
  A2718--A2739.

\bibitem{rohrig2023performance}
M.~R{\"o}hrig-Z{\"o}llner, M.~J. Becklas, J.~Thies, A.~Basermann, Performance
  of linear solvers in tensor-train format on current multicore architectures,
  arXiv preprint arXiv:2312.08006 (2023).

\bibitem{duan2017global}
R.~Duan, F.~Huang, Y.~Wang, T.~Yang, Global well-posedness of the {Boltzmann
  equation} with large amplitude initial data, Archive for Rational Mechanics
  and Analysis 225 (2017) 375--424.

\bibitem{duan2019boltzmann}
R.~Duan, Y.~Wang, The {Boltzmann} equation with large-amplitude initial data in
  bounded domains, Advances in Mathematics 343 (2019) 36--109.

\bibitem{bachmayr2012adaptive}
M.~Bachmayr, Adaptive low-rank wavelet methods and applications to two-electron
  schr{\"o}dinger equations, Ph.D. thesis, Hochschulbibliothek der
  Rheinisch-Westf{\"a}lischen Technischen Hochschule Aachen (2012).

\bibitem{bachmayr2015adaptive}
M.~Bachmayr, W.~Dahmen, Adaptive near-optimal rank tensor approximation for
  high-dimensional operator equations, Foundations of Computational Mathematics
  15 (2015) 839--898.

\bibitem{bachmayr2017kolmogorov}
M.~Bachmayr, A.~Cohen, Kolmogorov widths and low-rank approximations of
  parametric elliptic {PDEs}, Mathematics of Computation 86~(304) (2017)
  701--724.

\bibitem{oseledets2011matlab}
I.~Oseledets, S.~Dolgov, et~al., {MATLAB TT-Toolbox version 2.2}, Math Works,
  Natick, MA (2011).

\end{thebibliography}




\end{document}